\numberwithin{equation}{section}
\newtheorem{theorem}{Theorem}[section]
\newtheorem{corollary}[theorem]{Corollary}
\newtheorem{lemma}[theorem]{Lemma}
\theoremstyle{definition}
\newtheorem{example}[theorem]{Example}
\newtheorem{remark}[theorem]{Remark}
\theoremstyle{remark}
\newtheorem{problem}[theorem]{Problem}
\def\qed{\hfill\vbox{\hrule width 6 pt
\hbox{\vrule height 6 pt width 6 pt}} \medskip}
\def\IF{{\mathbb F}}
\def\IZ{{\mathbb Z}}
\def\bL{{\bf L}}
\def\bM{{\bf M}}
\def\bV{{\bf V}}
\def\b0{{\bf 0}}
\def\bV{{\bf V}}
\def\[{\left [}
\def\]{\right ]}
\def\({\left (}
\def\){\right )}
\def\<{{\langle}}
\def\>{{\rangle}}
\def\1{{\bf 1}}
\newcommand{\T}{{\thinspace\mathrm{t}}}
\newcommand{\bprob}{\begin{problem}}
\newcommand{\eprob}{\end{problem}}
\newcounter{example}
\protected\def\verythinspace{%
  \ifmmode
    \mskip0.33\thinmuskip
  \else
    \ifhmode
      \kern0.08334em
    \fi
  \fi
}
\begin{document}
\openup .75\jot
\title[Linear maps preserving  matrices annihilated by a fixed  polynomial]
{Linear maps 
preserving  matrices annihilated by a fixed  polynomial}

\author[Li, Tsai, Wang and Wong]{Chi-Kwong Li, Ming-Cheng Tsai, Ya-Shu Wang \and Ngai-Ching Wong}

\address[Li]{Department of Mathematics, The College of William
\& Mary, Williamsburg, VA 13185, USA.}
\email{ckli@math.wm.edu}

\address[Tsai]{General Education Center, National Taipei University of Technology, Taipei 10608, Taiwan.}
\email{mctsai2@mail.ntut.edu.tw}

\address[Wang]{Department of Applied Mathematics, National Chung Hsing University, Taichung 40227, Taiwan.}
\email{yashu@nchu.edu.tw}

\address[Wong]{Department of Applied Mathematics, National Sun Yat-sen
  University, Kaohsiung, 80424, Taiwan; Department of Healthcare Administration and Medical Information, and Center of
Fundamental Science, Kaohsiung  Medical University, 80708 Kaohsiung, Taiwan.}
  \email{wong@math.nsysu.edu.tw}

\date{\today}

\begin{abstract}
Let $\bM_n(\IF)$ be the algebra of $n\times n$ matrices over an arbitrary field $\IF$.
We consider linear maps  $\Phi: \bM_n(\IF) \rightarrow \bM_r(\IF)$
preserving  matrices annihilated by a fixed polynomial $f(x) = (x-a_1)\cdots (x-a_m)$ with $m\ge 2$ distinct zeroes $a_1, a_2,   \ldots, a_m  \in \IF$; namely,
$$
f(\Phi(A)) = 0\quad\text{whenever} \quad f(A) = 0.
$$

Suppose that $f(0)=0$, and the zero set
 $Z(f) =\{a_1, \dots, a_m\}$ is not an additive group.
Then $\Phi$ assumes the form
\begin{align}\label{eq:standard}
A \mapsto
S\begin{pmatrix} A \otimes D_1 &&\cr & A^\T \otimes D_2& \cr && 0_s\cr\end{pmatrix}S^{-1}, \tag{$\dagger$}
\end{align}
for some invertible matrix $S\in \bM_r(\IF)$,  invertible diagonal matrices $D_1\in \bM_p(\IF)$ and $D_2\in \bM_q(\IF)$, where
 $s=r-np-nq\geq 0$.
The  diagonal entries $\lambda$ in $D_1$ and
 $D_2$, as well as $0$ in the zero matrix $0_s$, are zero multipliers of $f(x)$ in the sense that
 $\lambda Z(f) \subseteq Z(f)$.

In general, assume that $Z(f) - a_1$ is not an additive group. If 
$\Phi(I_n)$ commutes with $\Phi(A)$ for all $A\in \bM_n(\IF)$, or if $f(x)$ has a unique zero multiplier  $\lambda=1$,
 then $\Phi$ assumes the   form \eqref{eq:standard}.

The above assertions follow from the special case when $f(x)  = x(x-1)=x^2-x$,
for which the  problem  reduces to the study of
linear idempotent preservers.
 It is shown that  a linear map
$\Phi: \bM_n(\IF) \rightarrow \bM_r(\IF)$ sending disjoint rank one idempotents to
disjoint idempotents   always assume the  above form \eqref{eq:standard} with $D_1=I_p$ and $D_2=I_q$, unless
$\bM_n(\IF) = \bM_2(\IZ_2)$.
 \end{abstract}

\subjclass[2000]{08A35, 15A86, 47B48}

\keywords{Idempotent preservers; linear preservers of matrix algebras}
\maketitle


\section{Introduction}

Linear preserver problems for matrices are well studied and have connections to many other areas; see for example
\cite{Semrl06, LT92,LP,CKLW03, Wong05, W-zpp, Monlar, LCLW18, LTWW20, GLS},
and the references therein.
There has been interest in characterizing a linear map
$\Phi: \bM_n(\IF)\to \bM_r(\IF)$ between matrices over an arbitrary field $\IF$ 
 preserving  matrices annihilated by a fixed polynomial  $f(x)\in \IF[x]$, that is,
$$
f(\Phi(A)) = 0\quad\text{whenever}\quad f(A) = 0;
$$
see, e.g., \cite{Bai,Hou,Howard,Semrl96}. When $f(x) = x(x-1) = x^2-x$, the problem
reduces to the study of idempotent preservers,
that is,
$\Phi(A)^2 = \Phi(A)$ whenever $A^2 = A$. 
Many interesting results 
about the structure of  a linear idempotent preserver  $\Phi$
have been obtained, under the assumptions such as $\Phi$ is surjective, $\Phi$ is unital, and/or
$\IF$ has characteristic $k\neq 2$, etc.;  
see, e.g., \cite{ZTC06,Kuzma}.

In \cite{Hou}, linear maps between complex matrix algebras 
which preserve elements annihilated by a \emph{complex} polynomial
$
f(x) = (x-a_1) \cdots (x-a_m)
$
with $m\geq 2$ distinct zeroes $a_1, \dots, a_m$ are considered.
It is shown that a unital linear map $\Phi$ preserves  matrices annihilated by  $f(x)$
 if and only if $\Phi$ sends idempotents
to idempotents.

In this paper, we consider a 
polynomial  $f(x) = (x-a_1)(x-a_2) \cdots (x-a_m)\in\IF[x]$
with  distinct $m\ge 2$
 zeroes $a_1, a_2,   \ldots, a_m$   in an arbitrary field   $\IF$ of characteristic $k$.
We shall have a complete description of $\Phi$ whenever the zero set   $Z(f) =\{a_1, \dots, a_m\}$ \emph{does not} have
nontrivial algebraic structure; namely, $Z(f)- a_1$ does not form an additive group. This is  automatically the case when $k=0$, and in more general when $m$ is not a power of $k$ (see Remark \ref{rem:poly}).

To prove our results, we first show in Theorem \ref{thm:main}(a) that for $\bM_n(\IF) \ne \bM_2(\IZ_2)$, a linear map $\Phi:\bM_n(\IF)\to \bM_r(\IF)$ assumes the form
\begin{align*}
A \mapsto
S\begin{pmatrix} A \otimes I_p &&\cr & A^\T \otimes I_q &\cr &&0_s \cr\end{pmatrix}S^{-1}
\end{align*}
exactly when $\Phi$ sends disjoint rank one idempotents to disjoint idempotents.
In the case when $\IF$ does not have characteristic 2, it is also equivalent to that
$\Phi$ sending idempotents to idempotents, or that $\Phi$ preserving matrices annihilated by $f(x)=x(x-1)$.
There are counter examples showing that Theorem \ref{thm:main}(a) does not hold if $\bM_n(\IF)=\bM_2(\IF)$;
see Remark \ref{rem:M2Z2-2}.

Let $f(x)\in \IF[x]$ be a polynomial with distinct $m$ zeroes such that the zero set $Z(f)$ is not an additive coset of $\IF$.
We call a scalar $\lambda\in \IF$   a multiplier of the zero set, or simply a  {zero multiplier}, of $f(x)$ if
 $\lambda Z(f)\subseteq Z(f)$.  If $\lambda$ is a nonzero  multiplier of the zero set $Z(f)$ of $f(x)$, then
  $\lambda Z(f)=Z(f)$ and $\lambda^h=1$ for some
positive integer $h$ dividing $m-1$.
Let $\Phi: \bM_n(\IF)\to \bM_r(\IF)$ be a linear map preserving matrices annihilating by $f(x)$.
Using Theorem \ref{thm:main}, we show in  Theorems \ref{thm:sep-poly}, \ref{thm:without1} and \ref{thm:whenPhiunital}  that
$\Phi$ assumes the form
\begin{align}\label{eq:abs-standard}
A \mapsto
S\begin{pmatrix} A \otimes D_1 &&\cr & A^\T \otimes D_2& \cr && 0_s\cr\end{pmatrix}S^{-1},
\end{align}
for some invertible matrix $S\in \bM_r(\IF)$,  invertible diagonal matrices $D_1\in \bM_p(\IF)$ and $D_2\in \bM_q(\IF)$, where
 $s=r-np-nq\geq 0$, whenever
\begin{enumerate}
  \item  $f(0)=0$,
  \item $\Phi(I_n)$ commutes with $\Phi(A)$ for all $A\in \bM_n(\IF)$, or
  \item $f(x)$ has a unique zero multiplier  $\lambda=1$.
\end{enumerate}
The (nonzero)
diagonal  entries $\lambda$ in $D_1$ or $D_2$, as well as $0$ in the zero matrix $0_s$, are zero multipliers of $f(x)$.

Example \ref{eg:Full-AP} shows  that the assumption $Z(f)$  not being an additive coset
 is   necessary.
Moreover, Example \ref{eg:counter-involutions} demonstrates  that $\Phi$ might not always assume the  form \eqref{eq:abs-standard},
in which $\Phi(I_n)$ does not commute with all $\Phi(A)$, and $f(x)=(x-1)(x+1)$ has zero multipliers $\pm1$.

Our paper is organized as follows.
In Section \ref{S:2}, we study linear maps sending
(disjoint) idempotents to (disjoint) idempotents.
 In Section \ref{S:4}, we characterize   linear maps  $\Phi$ between
 matrices satisfying $f(\Phi(A))= 0$ whenever $f(A) = 0$ for a given polynomial
 $f(x)$ with distinct zeroes.
Finally, in Section \ref{S:future} we present some possible extensions of our results.

\section{Linear maps preserving disjoint idempotents}\label{S:2}

In our discussion, we will denote by $\IF$ the underlying field, and
by   $\{E_{11}, E_{12}, \dots, E_{nn}\}$ the standard basis for $\bM_n(\IF)$;
namely, $E_{ij}=e_i e_j^\T$ where $\{e_1,\ldots, e_n\}$ is the standard basis for the (column) vector
space $\IF^n$ over $\IF$.
Here, we write $A^\T=(a_{ji})$ for the transpose of a rectangular matrix $A=(a_{ij})$.
 We also  write $I_k$ and $0_k$ for the $k\times k$ identity matrix and zero matrix, respectively.
Sometimes, we also write $0$ for a rectangular matrix of size determined in context.


\begin{theorem}\label{thm:main}
Let $\Phi: \bM_n(\IF) \rightarrow \bM_r(\IF)$ be a linear map.
Assume that $\Phi$ sends disjoint rank one idempotents to
disjoint idempotents.
\begin{enumerate}[(a)]
  \item Suppose  $\bM_n(\IF)\neq \bM_2(\mathbb{Z}_2)$. Then there are nonnegative integers $p, q$ with $s = n(p+q) \le r$,
and an invertible matrix $S$ in $\bM_r(\IF)$ such that
$\Phi$ assumes the form
\begin{align}\label{oldform-2}
A \mapsto
S\left(
  \begin{array}{ccc}
    A\otimes I_{p} &   &     \\
      &   A^\T \otimes I_{q} &     \\
      &   & 0_{r-s} \\
  \end{array}
\right)S^{-1} \quad\text{for all $A\in \bM_n(\IF)$.}
\end{align}
  \item Suppose  $\bM_n(\IF)= \bM_2(\mathbb{Z}_2)$. Then
  there are  nonnegative integers $k_1, k_2$ with $k_1 +k_2 \leq r$ and an invertible matrix $S$ in $\bM_r(\IF)$ such
    that $\Phi$ assumes the form
\begin{align*}
\begin{pmatrix}
  a & b \\
  c & d \\
\end{pmatrix}
  \mapsto
S\left[\begin{pmatrix}
  aI_{k_1}+bB_{11}+cC_{11} & bB_{12} + cC_{12} \\
  bB_{21} + cC_{21} & dI_{k_2}+bB_{22}+cC_{22}  \\
\end{pmatrix}\oplus 0_{r-k_1-k_2}\right]S^{-1},
\end{align*}
where $B_{ij}, C_{ij}$ are rectangular $k_i\times k_j$ matrices for $i,j=1,2$  satisfying that
\begin{align*}
 \begin{pmatrix}
   B_{11} & B_{12} \\
   B_{21} & B_{22}  \\
 \end{pmatrix}^2   =
 \begin{pmatrix}
   B_{11} & 0  \\
   0 & B_{22}  \\
 \end{pmatrix}
 \quad\text{and}\quad
 \begin{pmatrix}
   C_{11} & C_{12}   \\
   C_{21} & C_{22}   \\
 \end{pmatrix}^2  =
 \begin{pmatrix}
   C_{11} & 0  \\
   0 & C_{22}   \\
 \end{pmatrix}.
\end{align*}
\end{enumerate}
Conversely, if   $\Phi$ assumes the stated form in either case, then $\Phi$ sends disjoint rank one idempotents to
disjoint idempotents.
\end{theorem}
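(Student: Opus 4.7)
The converse direction is a direct computation: given the form in (a), for disjoint rank one idempotents $A,B \in \bM_n(\IF)$ the pairs $(A\otimes I_p,\, B\otimes I_p)$ and $(A^\T \otimes I_q,\, B^\T \otimes I_q)$ are themselves disjoint idempotents, and conjugation by $S$ preserves this, so $\Phi(A)$ and $\Phi(B)$ are disjoint idempotents. The check for (b) is similarly mechanical using the quadratic relations imposed on the blocks $B_{ij}$ and $C_{ij}$.

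For the forward direction of (a), I would set $P_i := \Phi(E_{ii})$, which gives pairwise disjoint idempotents in $\bM_r(\IF)$. The main source of information is that, for each $i\neq j$ and each $t\in\IF$, the pair $(E_{ii}+tE_{ij},\,E_{jj}-tE_{ij})$ consists of disjoint rank one idempotents whose sum $E_{ii}+E_{jj}$ is independent of $t$. Writing $X_{ij} := \Phi(E_{ij})$, the relations $(P_i+tX_{ij})^2 = P_i+tX_{ij}$ and $(P_i+tX_{ij})(P_j-tX_{ij})=0$, together with their transposes, are polynomial identities in $t$. When $|\IF|\ge 3$, comparing coefficients of $t$ and $t^2$ produces
\begin{align*}
X_{ij}^2=0,\quad P_iX_{ij}=X_{ij}P_j,\quad P_jX_{ij}=X_{ij}P_i,\quad P_iX_{ij}+X_{ij}P_i=X_{ij},
\end{align*}
which combine to force the Peirce decomposition $X_{ij} = P_iX_{ij}P_j + P_jX_{ij}P_i$. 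I would then split $X_{ij}$ into a \emph{direct} part $X_{ij}^+ := P_iX_{ij}P_j$ and a \emph{transpose} part $X_{ij}^- := P_jX_{ij}P_i$.

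Next, a single change of basis $S$ simultaneously puts the $P_i$ in block-diagonal form supported on $\operatorname{Range}(P_1+\cdots+P_n)$. Relative to this basis, each $X_{ij}^+$ is a linear map from the $j$th block to the $i$th block and $X_{ij}^-$ a map in the reverse direction. To isolate the tensor structure $A \otimes I_p \oplus A^\T \otimes I_q \oplus 0$, further disjoint-idempotent identities are needed: when $n \geq 3$, rank one idempotents such as $E_{ii}+tE_{ij}+uE_{ik}$ paired against suitable complementary idempotents supply triple-index relations, from which I expect to deduce composition laws of the form $X_{ij}^+X_{jk}^+ = X_{ik}^+$ and $X_{jk}^- X_{ij}^- = X_{ik}^-$, while $X^+$ and $X^-$ components annihilate each other; when $n = 2$ but $|\IF|\ge 3$, the single pair $(1,2)$ already carries enough parametric data via the $t$-family. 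From these composition laws one reads off a uniform splitting $\operatorname{Range}(P_i) = V_i^+ \oplus V_i^-$ with $\dim V_i^+ = p$ and $\dim V_i^- = q$ independent of $i$; adjusting $S$ once more delivers the canonical form asserted in (a).

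The principal obstacle is precisely this separation step and the exclusion of ``cross-mixed'' blocks akin to those appearing in (b). Both sources of slack, the parameter $t$ ranging over $\IF$ and the existence of three distinct indices, collapse exactly when $n=2$ and $|\IF|=2$, i.e., in $\bM_2(\IZ_2)$: the parameter $t$ only takes the values $0$ and $1$, and there is no third index available to invoke triple relations. For part (b) I would then analyze the constraints directly by tracking $\Phi$ on the six rank one idempotents of $\bM_2(\IZ_2)$ and matching components; the resulting system is exactly the parametrized family with rectangular blocks $B_{ij},C_{ij}$ satisfying the stated idempotency relations.
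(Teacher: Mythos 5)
Your first half is sound and essentially reproduces the paper's Lemma \ref{Ejj}: the parametric family of disjoint rank one idempotents $(E_{ii}+tE_{ij},\,E_{jj}-tE_{ij})$ does yield, for $|\IF|\ge 3$, the relations $\Phi(E_{ij})^2=0$ and the Peirce decomposition $\Phi(E_{ij})=P_i\Phi(E_{ij})P_j+P_j\Phi(E_{ij})P_i$, and your diagnosis of why $\bM_2(\IZ_2)$ is exceptional is correct. The converse and part (b) are also fine in outline. But the second half — the separation into $A\otimes I_p\oplus A^\T\otimes I_q$ — contains a genuine gap, and in one place a claim that is false as stated. Your $t$-family involves only $E_{ij}$ and never $E_{ji}$, so it produces no relation whatsoever between $\Phi(E_{12})$ and $\Phi(E_{21})$; in particular, for $n=2$ and $|\IF|\ge 3$ the assertion that ``the single pair $(1,2)$ already carries enough parametric data via the $t$-family'' cannot be right, since all the identities you extract are blind to $\Phi(E_{21})$, whereas the $p/q$ splitting is precisely a statement about how the two off-diagonal images interact. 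The paper closes exactly this hole with the symmetric idempotents $P_1=(1+a^2)^{-1}(a^2E_{ii}+a(E_{ij}+E_{ji})+E_{jj})$ and $P_2$ of Lemma \ref{P1P2}(b), whose images force $Y_{ij}Y_{ji}=I$ for $Y_{ij}=B_{ij}+C_{ij}$; this is also what proves $\operatorname{rank}\Phi(E_{ii})$ is the same $k$ for all $i$, a fact your outline never establishes but which is needed before any uniform $p,q$ can exist.

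The second missing ingredient is the mechanism that actually produces the splitting $I_p\oplus 0_q$ versus $0_p\oplus I_q$. Your proposed composition laws $X_{ij}^+X_{jk}^+=X_{ik}^+$ are not derived from any named idempotent identity (the rank one idempotent $E_{ii}+E_{ij}+E_{ik}$ only gives an anticommutator relation between $X_{ij}$ and $X_{ik}$, not a product formula linking three indices), and for $n=2$ no such third index exists at all. The paper instead normalizes $Y_{12}=I_k$ and applies the Fitting decomposition (Lemma \ref{P1P2}(c)) to $B_{12}$: writing $B_{12}\sim R\oplus N$ with $R$ invertible and $N$ nilpotent, the constraints $B_{12}B_{21}=B_{21}B_{12}=0$, $B_{12}+C_{12}=B_{21}+C_{21}=I_k$ and $C_{12}C_{21}=0$ force $R=I_p$ and $N=0_q$, which is where $I_p\oplus 0_q$ and $0_p\oplus I_q$ come from; the extension to all pairs $(i,j)$ then uses further carefully chosen rank one idempotents such as $uv^\T/(2+a)$ with $u=ae_1+e_i+e_j$, $v=e_1+e_i+e_j$. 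Without some substitute for the $Y_{ij}Y_{ji}=I$ step and the Fitting argument, your outline does not exclude, for example, a map in which the ``direct'' and ``transpose'' parts fail to decouple, so the proof is incomplete at its central point.
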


The assertion in Theorem \ref{thm:main}(a) fails when
$\bM_n(\IF)= \bM_2(\mathbb{Z}_2)$, as demonstrated below.

\begin{example}\label{rem:M2Z2-2}\label{rem:M2Z2-1}
Note that all
disjoint rank one idempotent pairs  in $\bM_2(\mathbb{Z}_2)$ are:
$$
\begin{pmatrix}
  1 & 0\\
  0 & 0 \\
\end{pmatrix}\ \text{and}\ \begin{pmatrix}
  0 & 0\\
  0 & 1 \\
\end{pmatrix}, \quad
\begin{pmatrix}
  0 & 1\\
  0 & 1 \\
\end{pmatrix}\ \text{and}\ \begin{pmatrix}
  1 & 1\\
  0 & 0 \\
\end{pmatrix}, \quad 
\begin{pmatrix}
  1 & 0\\
  1 & 0 \\
\end{pmatrix}\ \text{and}\ \begin{pmatrix}
  0 & 0\\
  1 & 1 \\
\end{pmatrix}.
$$
One can verify directly that the linear maps from $\bM_2(\mathbb{Z}_2)$ into $\bM_2(\mathbb{Z}_2)$, defined respectively by
$$
\begin{pmatrix}
  a & b\\
  c & d \\
\end{pmatrix}
\mapsto
\begin{pmatrix}
  a & 0\\
  0 & d \\
\end{pmatrix} \quad\text{or}\quad
 \begin{pmatrix}
  a & b \\
  c & d \\
\end{pmatrix}
  \mapsto
\begin{pmatrix}
  a+b & 0 \\
  0 & 0 \\
 \end{pmatrix},
$$
send disjoint rank one idempotents to disjoint  idempotents.
The same is true for the linear map from $\bM_2(\IZ_2)$ into $\bM_3(\IZ_2)$ defined by
$$
\begin{pmatrix}
  a & b \\
  c & d \\
\end{pmatrix}
  \mapsto
\begin{pmatrix}
  a & b & b + c \\
  0 & d & 0 \\
  0 & 0 & d \\
\end{pmatrix}.
$$
 However, neither of these three maps assumes the form \eqref{oldform-2}.
\end{example}

\begin{proof}[Proof of Theorem \ref{thm:main}(b)]
    Let $\Phi: \bM_2(\IZ_2)\to \bM_r(\IZ_2)$ be
  a linear map sending disjoint rank one idempotents to disjoint idempotents.
  Since $E_{11}, E_{22}$ are disjoint rank one idempotents, $\Phi(I_2)=\Phi(E_{11})+\Phi(E_{22})$ is a sum
  of disjoint idempotents, and thus an idempotent.  After a similarity transformation, we can assume that
  $$
  \Phi(E_{11})=  I_{k_1}\oplus 0_{k_2} \oplus 0_{r-k_1 - k_2} \quad\text{and}\quad
  \Phi(E_{22})=  0_{k_1}\oplus I_{k_2} \oplus 0_{r-k_1 - k_2},
  $$
  for some nonnegative integers $k_1, k_2$ with $r-k_1 - k_2\geq 0$.
    Write
    $$
    \Phi(E_{12})= \begin{pmatrix}
   B_{11} & B_{12} & B_{13} \\
   B_{21} & B_{22} & B_{23} \\
   B_{31} & B_{32} & B_{33} \\
 \end{pmatrix}
 $$
 as a block matrix accordingly.
  Since $E_{11} + E_{12}$ and $E_{12} + E_{22}$ are disjoint  rank one idempotents, we have
  $\Phi(E_{11} + E_{12})\Phi(E_{12}+E_{22})= \Phi(E_{12}+E_{22})\Phi(E_{11} + E_{12})=0$.  This gives
\begin{align*}
\Phi(E_{12})^2 &= \Phi(E_{11})\Phi(E_{12})+ \Phi(E_{12})\Phi(E_{22}) = \Phi(E_{12})\Phi(E_{11})+ \Phi(E_{22})\Phi(E_{12}) \\
                &=
                \begin{pmatrix}
   B_{11} & 0 & B_{13} \\
   0 & B_{22} & 0 \\
   0 & B_{32} & 0 \\
 \end{pmatrix} =
   \begin{pmatrix}
   B_{11} & 0 & 0 \\
   0 & B_{22} & B_{23} \\
   B_{31} & 0 & 0 \\
 \end{pmatrix}.
\end{align*}
On the other hand,
  $\Phi(E_{11} + E_{12})^2 = \Phi(E_{11} + E_{12})$.  This gives
  $$
  \Phi(E_{12})^2 = \Phi(E_{11})\Phi(E_{12}) + \Phi(E_{12})\Phi(E_{11}) + \Phi(E_{12}) =
  \begin{pmatrix}
   B_{11} & 0 & 0 \\
   0 & B_{22} & B_{23} \\
   0 & B_{32} & B_{33} \\
 \end{pmatrix}.
$$
In particular, $B_{13}, B_{23}, B_{31}, B_{32}$ and $B_{33}$ are all zero rectangular matrices.
Therefore,
\begin{align}\label{eq:B}
\Phi(E_{12})  =
 \begin{pmatrix}
   B_{11} & B_{12} & 0 \\
   B_{21} & B_{22} & 0 \\
   0 & 0 & 0 \\
 \end{pmatrix} \quad\text{such that}\quad
\Phi(E_{12})^2  =
 \begin{pmatrix}
   B_{11} & 0 & 0 \\
   0 & B_{22} & 0 \\
   0 & 0 & 0 \\
 \end{pmatrix}.
\end{align}
Similarly, we have
\begin{align}\label{eq:C}
\Phi(E_{21})  =
 \begin{pmatrix}
   C_{11} & C_{12} & 0 \\
   C_{21} & C_{22} & 0 \\
   0 & 0 & 0 \\
 \end{pmatrix} \quad\text{such that}\quad
\Phi(E_{21})^2  =
 \begin{pmatrix}
   C_{11} & 0 & 0 \\
   0 & C_{22} & 0 \\
   0 & 0 & 0 \\
 \end{pmatrix}.
\end{align}
In summary, after a similarity transformation, we can derive that
  $\Phi$ must assume the expected form
\begin{align*}\label{eq:M2Z2}
\begin{pmatrix}
  a & b \\
  c & d \\
\end{pmatrix}
  \mapsto
\begin{pmatrix}
  aI_{k_1}+bB_{11}+cC_{11} & bB_{12} + cC_{12} \\
  bB_{21} + cC_{21} & dI_{k_2}+bB_{22}+cC_{22}  \\
\end{pmatrix}\oplus 0_{r-k_1-k_2},
\end{align*}
with rectangular matrices $B_{ij}, C_{ij}$ satisfying \eqref{eq:B} and \eqref{eq:C}.
By direct computations, we see
 that any map $\Phi$ assuming the above form sends disjoint rank one idempotents to disjoint idempotents.
\end{proof}

In the following we establish some lemmas needed for the proof of Theorem \ref{thm:main}(a).
We  start with the following simple observations.

\begin{lemma} \label{P1P2}\label{nil}
Let $\IF$ be a field and $n$ be a positive integer with $n \ge 2$.
\begin{itemize}
\item[{\rm (a)}]
The linear space $\bM_n(\IF)$ has the following basis consisting
of rank one idempotents
$$\{E_{jj}: 1 \le j \le n\} \cup \{E_{ii} + E_{ij}: 1 \le i \le n, i\ne j\}.$$
For $i \ne j$,  $E_{ii} + E_{ij}$ and $E_{jj}-E_{ij}$
are disjoint rank one idempotents.

\item[{\rm (b)}]
Suppose $\IF \ne \IZ_2$. There is $a\ne 0$ with $a^2 + 1 \neq 0$ such that
\begin{align*}
P_1 &=(1+a^2)^{-1}(a^2 E_{ii} + a(E_{ij}+E_{ji}) + E_{jj})\quad\text{and}\\
P_2 &= (1+a^2)^{-1}( E_{ii} - a(E_{ij}+E_{ji}) + a^2E_{jj})
\end{align*}
are disjoint rank one idempotents whenever $i\ne j$.

\item[{\rm (c)}] ({See, e.g., \cite[Theorem A.0.4] {ZTC06}})
Every $A \in \bM_r(\mathbb{F})$ is similar to a direct sum
 $R\oplus N$  of an invertible matrix $R$ in $\bM_{r-s}(\IF)$ and a nilpotent matrix  $N$ in $\bM_s(\IF)$ with a nonnegative
 integer $s\leq r$, such that
$N$ is a direct sum of upper triangular Jordan blocks
for the eigenvalue zero of $A$.
Here, $R$ or $N$ can be vacuous.
In particular, if $A$ is an idempotent, then $R = I_{r-s}$ and $N = 0_s$.
\end{itemize}
\end{lemma}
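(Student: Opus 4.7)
My plan is to verify (a) and (b) by direct matrix computation, with the only subtlety being the existence of the scalar $a$ in part~(b); part~(c) is taken from the cited reference.

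For (a), I would first check that each listed matrix is a rank one idempotent. Writing $E_{ii}+E_{ij}=e_i(e_i+e_j)^{\T}$ exhibits the rank, and squaring collapses immediately using $E_{ij}E_{ii}=0=E_{ij}^2$. The set has $n+n(n-1)=n^2$ elements, so it suffices to show that it spans $\bM_n(\IF)$, which follows from $E_{ij}=(E_{ii}+E_{ij})-E_{ii}$ for $i\ne j$. For the disjointness claim I would note that $E_{jj}-E_{ij}=(e_j-e_i)e_j^{\T}$ is also a rank one idempotent, and then expand $(E_{ii}+E_{ij})(E_{jj}-E_{ij})$ and $(E_{jj}-E_{ij})(E_{ii}+E_{ij})$ into four terms each that pair up and cancel to zero.

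For (b), I would rewrite the matrices as outer products
$$
P_1=(1+a^2)^{-1}(ae_i+e_j)(ae_i+e_j)^{\T},\qquad P_2=(1+a^2)^{-1}(e_i-ae_j)(e_i-ae_j)^{\T},
$$
which makes the rank visible. Both are idempotent because $(ae_i+e_j)^{\T}(ae_i+e_j)=1+a^2=(e_i-ae_j)^{\T}(e_i-ae_j)$, and disjointness follows instantly from $(ae_i+e_j)^{\T}(e_i-ae_j)=a-a=0$ together with the symmetric computation. The real point to verify is the existence of $a\in\IF$ with $a\ne 0$ and $a^2+1\ne 0$ whenever $\IF\ne\IZ_2$. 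The forbidden set $\{0\}\cup\{x\in\IF:x^2+1=0\}$ has size at most three (at most two in characteristic~$2$, where $x^2+1=(x+1)^2$), so an acceptable $a$ exists whenever $|\IF|\ge 4$. The remaining case $|\IF|=3$ is handled by $a=1\in\IZ_3$, for which $1+1=2\ne 0$. Only $\IF=\IZ_2$ admits no valid $a$, which is precisely the excluded hypothesis.
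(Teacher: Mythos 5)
Your proof is correct and matches the paper's (very terse) argument: direct verification of (a) and (b), with the existence of $a$ in (b) settled by the same counting argument ($a=1$ works for $\IZ_3$; for $|\IF|>3$ the at most two roots of $x^2+1$ together with $0$ cannot exhaust the field), and (c) delegated to the cited reference. The outer-product representations you use are a clean way to organize the "check directly" steps.
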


\begin{lemma} \label{Ejj}
Suppose $\Phi: \bM_n(\IF) \rightarrow \bM_r(\IF)$ is linear and sends disjoint rank one
idempotents to disjoint idempotents. Then there is an invertible $S \in \bM_r(\IF)$ and
nonnegative integers $k_1, \dots, k_n$ with $s= r-(k_1 + \cdots + k_n)\geq 0$
such that
\begin{equation}\label{Ejj-form}
S^{-1}\Phi(E_{jj})S = 0_{k_1} \oplus \cdots \oplus 0_{k_{j-1}}
\oplus I_{k_j} \oplus 0_{k_{j+1}} \oplus \cdots \oplus 0_{k_n} \oplus 0_{s}.
\end{equation}
Moreover, if  $\bM_n(\IF)\neq \bM_2(\IZ_2)$ we have
\begin{equation}\label{Eij-form}
S^{-1}\Phi(E_{ij})S = \left\{ \begin{array}{ll}
((E_{ij}\otimes B_{ij}) + (E_{ji}\otimes B_{ji})) \oplus 0_{s}     & \mbox{ if $i<j$,} \\
 ((E_{ij}\otimes C_{ij}) + (E_{ji}\otimes C_{ji})) \oplus 0_{s}     & \mbox{ if $i>j$.}
\end{array}
\right. \end{equation}
with $B_{ij} B_{ji} = C_{ij} C_{ji} = 0_{k_i}$  and $B_{ji} B_{ij} = C_{ji} C_{ij} = 0_{k_j}$
for distinct $i,j = 1, \dots, n$.
\end{lemma}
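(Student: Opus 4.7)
The proof would proceed in two stages: first determine the form of $\Phi(E_{jj})$ by simultaneous similarity of the commuting family $\{\Phi(E_{11}),\ldots,\Phi(E_{nn})\}$, and then work in the adapted basis to pin down the block structure of each $\Phi(E_{ij})$ via a carefully chosen collection of disjoint idempotent relations.

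For the first stage, $E_{11},\ldots,E_{nn}$ are pairwise disjoint rank one idempotents, so their images form a pairwise orthogonal (hence commuting) family of idempotents in $\bM_r(\IF)$, each similar by Lemma~\ref{nil}(c) to $I_{k_j}\oplus 0$. I would take $S\in\bM_r(\IF)$ whose column blocks are ordered bases of $\operatorname{im}\Phi(E_{11}),\ldots,\operatorname{im}\Phi(E_{nn})$ followed by any basis for a complementary subspace of dimension $s=r-\sum_j k_j$; after conjugation by $S$, every $\Phi(E_{jj})$ takes the form \eqref{Ejj-form}.

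For the second stage, write $E=S^{-1}\Phi(E_{ij})S=(X_{ab})$ block-wise on the index set $\{1,\ldots,n,s\}$. The key observation is that $E_{ii}+\varepsilon E_{ij}$ and $E_{jj}-\varepsilon E_{ij}$ are disjoint rank one idempotents for each sign $\varepsilon\in\{+1,-1\}$, so all four of $\Phi(E_{ii})\pm E$ and $\Phi(E_{jj})\pm E$ are idempotents. Adding and subtracting the two expansions of $(\Phi(E_{ii})\pm E)^{2}=\Phi(E_{ii})\pm E$ in characteristic $\neq 2$ yields both $\Phi(E_{ii})E+E\Phi(E_{ii})=E$ and $E^{2}=0$, and analogously for $j$. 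A block-by-block reading of $\Phi(E_{ii})E+E\Phi(E_{ii})=E$ forces $X_{ii}=0$ and $X_{ab}=0$ whenever $a\neq i$ and $b\neq i$; the analogue for $j$ kills the remaining blocks in row $i$ and column $i$ except for $(i,j)$ and $(j,i)$. Reading $E^{2}=0$ at the $(i,i)$ and $(j,j)$ blocks then gives the nilpotency $X_{ij}X_{ji}=X_{ji}X_{ij}=0$; one sets $B_{ij}=X_{ij}$, $B_{ji}=X_{ji}$ for $i<j$ (and the $C$-analogues for $i>j$).

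The main obstacle is the characteristic $2$ case, where $E_{ii}+E_{ij}=E_{ii}-E_{ij}$ and the sign trick collapses. For $|\IF|\ge 3$ (including characteristic $2$), the rescue is that $E_{ii}+aE_{ij}$ is a rank one idempotent for \emph{every} $a\in\IF$, with disjoint partner $E_{jj}-aE_{ij}$; this yields the polynomial identity $a(\Phi(E_{ii})E+E\Phi(E_{ii})-E)+a^{2}E^{2}=0$ in $a$, and evaluating at two distinct nonzero values of $a$ forces both $E^{2}=0$ and the linear relation, after which the argument proceeds as above. The genuinely delicate subcase is $\IF=\IZ_{2}$ with $n\ge 3$: here neither method works directly, but $E_{ii}+E_{ij}+E_{ik}$ is a rank one idempotent disjoint from $E_{jj}-E_{ij}$ for each $k\neq i,j$, and combining this relation with the basic one for $(E_{ii}+E_{ij},E_{jj}-E_{ij})$ yields $\Phi(E_{ij})\Phi(E_{ik})=\Phi(E_{ik})\Phi(E_{ij})=0$; these extra constraints, read in blocks and iterated over $k\neq i,j$, ultimately suppress the residual $X_{ii}$ block (and similarly $X_{jj}$). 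The statement excludes $\bM_{2}(\IZ_{2})$ precisely because both escape routes close off simultaneously when $n=2$ and $|\IF|=2$.
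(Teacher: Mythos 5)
Your first stage and your treatment of every field with at least three elements are fine, and in fact your two-point evaluation of the identity $a\bigl(\Phi(E_{ii})E+E\Phi(E_{ii})-E\bigr)+a^{2}E^{2}=0$ cleanly unifies what the paper does separately in characteristic $\neq 2$ (the $\pm$ sign trick) and in characteristic $2$ with $\IF\neq\IZ_2$ (where the paper uses the pair of parameters $a,a^{-1}$ with $a^2\neq 1$). The gap is in the one case that remains, $\IF=\IZ_2$ with $n\geq 3$. There, after the preliminary step (which you also need to spell out: over $\IZ_2$ one first has to show, as the paper does via the idempotency of $\Phi(E_{ii}+E_{ij})$ together with the disjointness of $E_{ii}+E_{ij}$ and $E_{jj}+E_{ij}$, that $\Phi(E_{ij})$ is supported on the four blocks $(i,i),(i,j),(j,i),(j,j)$ with $\Phi(E_{ij})^2$ equal to its diagonal part), the relations you propose do not suffice to kill the diagonal blocks. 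Writing $X_{ii}$ for the $(i,i)$ block of $\Phi(E_{ij})$ and $W_{ii}$ for the $(i,i)$ block of $\Phi(E_{ik})$, your identity $\Phi(E_{ik})\Phi(E_{ij})=\Phi(E_{ij})\Phi(E_{ik})=0$, read at the $(i,i)$ block, gives only $W_{ii}X_{ii}=X_{ii}W_{ii}=0$ (the cross terms vanish because $\Phi(E_{ik})$ has no $(k,j)$ or $(j,k)$ support). Iterating over $k$ produces more relations of the same shape, all of which are satisfied, for example, if every $\Phi(E_{il})$ had the same square-zero $(i,i)$ block $N\neq 0$. So nothing in the constraints you list forces $X_{ii}=0$.

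What actually kills the diagonal block in the paper is a different kind of disjoint pair, one that routes through the third index on \emph{both} sides: $\{E_{11}+E_{1n},\,E_{12}+E_{22}+E_{n2}\}$. Expanding its zero product and discarding the terms that vanish for support reasons leaves
\begin{equation*}
\Phi(E_{11})\Phi(E_{12})+\Phi(E_{1n})\Phi(E_{n2})=0 ,
\end{equation*}
and the $(1,1)$ block of the first summand is exactly $B_{11}$ while the $(1,1)$ block of the second is $0$ (because $\Phi(E_{n2})$ has no support in block column $1$); hence $B_{11}=0$, and the companion pair $\{E_{11}+E_{12}+E_{1n},\,E_{22}+E_{n2}\}$ gives $B_{22}=0$ the same way. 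The essential ingredient is the matrix unit $E_{n2}$, i.e.\ an idempotent built from a path $1\to n\to 2$, which never appears in your argument; without it (or something equivalent) the residual diagonal blocks survive, and this is precisely the phenomenon that produces the genuine counterexamples over $\bM_2(\IZ_2)$ in Example \ref{rem:M2Z2-2}, where no third index is available.
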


\it Proof. \rm
Note that $\Phi(E_{11}), \dots, \Phi(E_{nn})$
are idempotents and $\Phi(E_{ii})\Phi(E_{jj})=0$ if $i\neq j$.
Let $k_i$ be the rank of the idempotent $\Phi(E_{ii})$ for $i=1,\ldots, n$, and
thus the idempotent $\Phi(I_n) = \sum_{i=1}^n \Phi(E_{ii})$ has rank $k_1 + \cdots + k_n$.
  We can find a basis of the vector space $\IF^r$ which
is a direct sum of the range spaces of $\Phi(E_{11}), \ldots, \Phi(E_{nn})$ and a subspace of dimension
$s = r-( k_1 + \cdots + k_n)$
 complement to their sum.
Using these basic vectors as columns, we can find an invertible $S\in \bM_r(\IF)$ such that
$S^{-1}\Phi(E_{jj})S$ has the asserted form  (\ref{Ejj-form}).
Assuming that  (\ref{Ejj-form})
holds with $S = I_r$.

\smallskip

\noindent\textbf{Case 1.} Assume that $\IF$ does not have characteristic 2.

We consider $\Phi(E_{ij})$ with $i\ne j$ in the following.
Let
$$
\Phi(E_{12}) =Z = (Z'_{ij})_{1 \le i, j \le n+1}\in \bM_{r}(\IF)
$$
for rectangular $k_i\times k_j$ matrix $Z'_{ij}$   with $i, j = 1, \dots, n+1$ and
$k_{n+1} = s$.
Let
$$
X_1 = E_{11} + E_{12},\quad X_2 = E_{22}-E_{12},\quad
Y_1 = E_{11} -  E_{12}\quad\text{and}\quad Y_2 = E_{22} + E_{12}.
$$
Then $\{X_1, X_2\}, \{Y_1, Y_2\}$
are disjoint rank one idempotent pairs. Hence,
\begin{align}\label{x1x2-form}
0 &= \Phi(X_1)\Phi(X_2) = (\Phi(E_{11}) + Z)(\Phi(E_{22})-Z) =
Z\Phi(E_{22}) -  \Phi(E_{11})Z - Z^2, \notag \\
0 &= \Phi(Y_1)\Phi(Y_2) = (\Phi(E_{11}) - Z)(\Phi(E_{22})+Z) =
- Z\Phi(E_{22}) + \Phi(E_{11})Z - Z^2.
\end{align}

By \eqref{x1x2-form}, $Z^2=Z\Phi(E_{22}) - \Phi(E_{11})Z = 0$.  This
implies that $Z'_{1j}=0$ for all $j\ne 2$
and $Z'_{i2} = 0$ for all $i \ne 1$.
Similarly, by the fact that
$0 = \Phi(X_2)\Phi(X_1) = \Phi(Y_2)\Phi(Y_1)$, we see that
$Z'_{2j} = 0$ for all $j \ne 1$ and $Z'_{i1} = 0$  for $i \ne  2$.

On the other hand, since $\Phi(E_{11} + E_{12})^2= \Phi(E_{11} + E_{12})$ and $Z^2=0$, we have
$$
Z = (I_{k_1}\oplus 0_{r-k_1})Z + Z(I_{k_1}\oplus 0_{r-k_1}) = \begin{pmatrix}
                            0 & Z'_{12} \\
                            Z'_{21} & 0 \\
                          \end{pmatrix} \oplus 0_{r-k_1-k_2}
                          = (E_{12}\otimes Z'_{12}) + (E_{21}\otimes Z'_{21}) \oplus 0_{r-k_1-k_2}.
$$
Because $Z^2=0$, we have $Z'_{12}Z'_{21} = 0_{k_1}$ and $Z'_{21}Z'_{12}=0_{k_2}$.
Now we might set $B_{12}=Z'_{12}$ and $B_{21}=Z'_{21}$, and write
$$
\Phi(E_{12})= \big((E_{12}\otimes B_{12}) + (E_{21}\otimes B_{21})\big) \oplus 0_{r-k_1-k_2}.
$$

Arguing in a similar way for $\Phi(E_{21})$, we can write
$$
\Phi(E_{21})= \big((E_{12}\otimes C_{12}) + (E_{21}\otimes C_{21})\big) \oplus 0_{r-k_1-k_2}
$$
for some rectangular matrices $C_{12}, C_{21}$ such that $C_{12}C_{21}= 0_{k_1}$ and $C_{21}C_{12}=0_{k_2}$.
In general, we will have similar relations for the matrix block entries of other $\Phi(E_{ij})$ as stated
in \eqref{Eij-form}.

\smallskip

\noindent\textbf{Case 2.} Assume that $\IF$  has characteristic 2, $n=2$, but $\IF\neq \IZ_2$.

Let
$$
\Phi(E_{12}) =Z = (Z'_{ij})_{1 \le i, j \le n+1}\in \bM_{r}(\IF)
$$
as in Case 1.  Let $a\in \IF$ with $a^2\neq 1$,
and
$$
X_1 = E_{11} + aE_{12},\quad X_2 = E_{22}+aE_{12},\quad
Y_1 = E_{11} + a^{-1}E_{12}\quad\text{and}\quad Y_2 = E_{22} + a^{-1}E_{12}.
$$
Then $\{X_1, X_2\}, \{Y_1, Y_2\}$  are disjoint rank one idempotent pairs.
Arguing  as in Case 1, we see that
$$
Z^2=a^{-1}(Z\Phi(E_{22}) + \Phi(E_{11})Z) = a(Z\Phi(E_{22}) + \Phi(E_{11})Z)=0,
$$
and we establish \eqref{Eij-form} in the same way.

\smallskip

\noindent\textbf{Case 3.} Assume that $\IF$ has characteristic 2 and $n\geq 3$.

Using the arguments in the proof of Theorem  \ref{thm:main}(b), we see that
\begin{align*}
\Phi(E_{12})  &=
 \begin{pmatrix}
   B_{11} & B_{12} \\
   B_{21} & B_{22} \\
  \end{pmatrix}\oplus 0_{r-k_1-k_2} \quad\text{and}\quad
\Phi(E_{12})^2  =
 \begin{pmatrix}
   B_{11} & 0 \\
   0 & B_{22} \\
   \end{pmatrix}\oplus 0_{r-k_1-k_2}, \\
\Phi(E_{21})  &=
 \begin{pmatrix}
   C_{11} & C_{12} \\
   C_{21} & C_{22} \\
   \end{pmatrix}\oplus 0_{r-k_1-k_2} \quad\text{and}\quad
\Phi(E_{21})^2  =
 \begin{pmatrix}
   C_{11} & 0  \\
   0 & C_{22}  \\
 \end{pmatrix}\oplus 0_{r-k_1-k_2}.
\end{align*}
Similarly, we can show that all $(p, q)$ blocks of $\Phi(E_{ij})$ are 0 except for $(p,q) = (i,i), (i,j), (j,i), (j,j)$,
and $\Phi(E_{ij})^2$ equals the diagonal of $\Phi(E_{ij})$.

Considering the disjoint rank one idempotent pairs
\begin{gather*}
\{E_{11}+E_{12}+E_{1n}, E_{12}+E_{22}\}, \quad  \{E_{11}+E_{12}, E_{12}+E_{22}\}, \\
\{E_{11}+E_{12}, E_{12}+E_{22}+E_{n2}\} \quad \text{and}\quad \{E_{11}+E_{12}, E_{12}+E_{22}\},
\end{gather*}
 we see that
$$
\Phi(E_{1n})\Phi(E_{12})=0 \quad\mbox{and}\quad \Phi(E_{12})\Phi(E_{n2})=0.
$$
Since $\{E_{11}+E_{1n}, E_{12}+E_{22}+E_{n2}\}$ is a disjoint rank one idempotent pair, we have
$$
0=\Phi(E_{11}+E_{1n})\Phi(E_{12}+E_{22}+E_{n2})=\Phi(E_{11})\Phi(E_{12})+\Phi(E_{1n})\Phi(E_{n2}).
$$
Because the $(1, 1)$ block of $\Phi(E_{11})\Phi(E_{12})+\Phi(E_{1n})\Phi(E_{n2})$ is $B_{11}$, we have $B_{11}=0$.
Since $\{E_{11}+E_{12}+E_{1n}, E_{22}+E_{n2}\}$ is also a disjoint rank one idempotent pair, we have
$$
0=\Phi(E_{11}+E_{12}+E_{1n})\Phi(E_{22}+E_{n2})=\Phi(E_{12})\Phi(E_{22})+\Phi(E_{1n})\Phi(E_{n2}).
$$
Because  the $(2, 2)$ block of $\Phi(E_{12})\Phi(E_{22})+\Phi(E_{1n})\Phi(E_{n2})$ is $B_{22}$, we have $B_{22}=0$.
Hence,
\begin{align*}
\Phi(E_{12}) &= \begin{pmatrix} 0_{k_1} & B_{12} \cr B_{21} & 0_{k_2} \cr\end{pmatrix}
\oplus 0_{r-k_1-k_2} \quad \hbox{ with }  \
B_{12} B_{21} = 0_{k_1} \ \hbox{ and } \ B_{21} B_{12} = 0_{k_2}.
\end{align*}

In a similar manner, we can obtain the asserted form (\ref{Eij-form}) if we replace $(1,2, n)$ by $(i, j, k)$ for some $k\neq i,j$.
\qed

\medskip\noindent
{\bf  Proof of Theorem \ref{thm:main}(a).} \rm
 It is clear that if $\Phi$ assumes the form \eqref{oldform-2} then $\Phi$ sends disjoint
 rank one idempotents to disjoint idempotents.
 Assume now that the linear map $\Phi: \bM_n(\IF)\to \bM_r(\IF)$ sends disjoint
 rank one idempotents to disjoint idempotents and $\bM_n(\IF)\neq \bM_2(\IZ_2)$.
Assume also that $\Phi$ is nonzero to avoid trivial consideration.

By Lemma \ref{Ejj}, replacing $\Phi$ by the map
$A \mapsto S^{-1}\Phi(A)S$, we can assume that  (\ref{Ejj-form})  and (\ref{Eij-form})
hold with $S = I_r$.
Denote by
$$
Y_{ij} = B_{ij} + C_{ij},
$$
which is a  $k_i\times k_j$ rectangular matrix for  $i\neq j$  in between $1$ and $n$.

\smallskip

\noindent\textbf{Step 1.} Suppose  that $\IF\neq \mathbb{Z}_2$.
Let $s'=r-k_1-k_2$, and write
$$
\Phi(E_{12}+E_{21}) = \begin{pmatrix} 0_{k_1} & Y_{12} \cr Y_{21} & 0_{k_2}\cr
\end{pmatrix} \oplus 0_{s'}.
$$
By Lemma \ref{P1P2}, there is $a\ne 0$ in $\IF$
such that $a^2+ 1 \ne 0$, and
$$P_1 = (1+a^2)^{-1} (E_{11} + a^2 E_{22} + a(E_{12}+E_{21}))$$
and
$$P_2 = (1+a^2)^{-1}(a^2 E_{11} + E_{22} - a (E_{12}+E_{21}))$$
are
disjoint rank one idempotents in $\bM_n(\IF)$.
Thus,
\begin{align*}
0_r & =  (1+a^2)^{2}\Phi(P_1)\Phi(P_2)\\
& =
\Phi(E_{11} + a^2 E_{22} + a(E_{12}+E_{21}))
\Phi(a^2E_{11}+ E_{22} - a(E_{12}+E_{21})) \\
& =
\left[\begin{pmatrix}  I_{k_1} & 0  \cr 0 & a^2I_{k_2}  \cr \end{pmatrix} \oplus 0_{s'} + a Y\right]
\left[\begin{pmatrix} a^2I_{k_1} & 0  \cr 0 & I_{k_2}  \cr \end{pmatrix} \oplus 0_{s'} -a Y\right]\\
&=  a^2 \left[\begin{pmatrix}  I_{k_1} & 0  \cr 0 & I_{k_2}  \cr \end{pmatrix}
\oplus 0_{s'}\right ] - a^2 Y^2.
\end{align*}
Therefore, $Y_{12}Y_{21} = I_{k_1}$ and $Y_{21}Y_{12} = I_{k_2}$.
Hence, $k_1 = k_2$ and $Y_{21} = Y_{12}^{-1}$.  Set $k=k_1$.

We apply the arguments for
$\{E_{11}, E_{22}, E_{12}, E_{21}\}$ to  $\{E_{ii}, E_{jj}, E_{ij}, E_{ji}\}$
with  $1 \le i < j \le n$ and $j\geq 3$.
We can conclude that $k_i=k_j = k$ and
\begin{equation}\label{Eij-1}
\Phi(E_{ij}+E_{ji}) = (E_{ij}\otimes Y_{ij} + E_{ji}\otimes Y_{ji}) \oplus 0_{r-nk}\quad
\hbox{with some } Y_{ji}=Y_{ij}^{-1}\in \bM_k(\IF).
\end{equation}

\smallskip

\noindent\textbf{Step 2.}
Suppose   that $\IF=\mathbb{Z}_2$ and $n\geq 3$.
Consider the rank one idempotents in $\bM_n(\mathbb{Z}_2)$:
\begin{gather*}
E= \begin{pmatrix}
     1 & 1 & 1 \\
     1 & 1 & 1 \\
     1 & 1 & 1 \\
   \end{pmatrix} \oplus 0_{n-3}, \quad
F_1 =  \begin{pmatrix}
     1 & 0 & 1 \\
     1 & 0 & 1 \\
     0 & 0 & 0 \\
   \end{pmatrix} \oplus 0_{n-3} \quad\text{and}\quad
F_2 =  \begin{pmatrix}
     0 & 0 & 0 \\
     1 & 1 & 0 \\
     1 & 1 & 0 \\
   \end{pmatrix} \oplus 0_{n-3}.
\end{gather*}
By Lemma \ref{Ejj} we can write
\begin{gather*}
\Phi(E)= \begin{pmatrix}
     I_{k_1} & Y_{12} & Y_{13} \\
     Y_{21} & I_{k_2} & Y_{23} \\
     Y_{31} & Y_{32} & I_{k_3} \\
   \end{pmatrix}\oplus 0_{r-k_1-k_2-k_3}, \\
\Phi(F_1) =  \begin{pmatrix}
     I_{k_1} & C_{12} & B_{13} \\
     C_{21} & 0 & B_{23} \\
     B_{31} & B_{32} & 0 \\
   \end{pmatrix}\oplus 0_{r-k_1-k_2-k_3} \quad\text{and} \quad
\Phi(F_2) =  \begin{pmatrix}
     0 & C_{12} & C_{13} \\
     C_{21} & I_{k_2} & C_{23} \\
     C_{31} & C_{32} & 0 \\
   \end{pmatrix}\oplus 0_{r-k_1-k_2-k_3}.
\end{gather*}
Since $E$ is disjoint from both $F_1, F_2$, looking at the $(1,1)$ and $(2,2)$ entries of the zero products
$\Phi(E)\Phi(F_1)$, $\Phi(F_1)\Phi(E)$, $\Phi(F_2)\Phi(E)$  and $\Phi(E)\Phi(F_2)$, respectively, we have
\begin{align*}
Y_{12}C_{21}+ Y_{13}B_{31} & = C_{12}Y_{21}+ B_{13}Y_{31}= I_{k_1},\\
Y_{12}C_{21}+Y_{13}C_{31} & = C_{12}Y_{21}+C_{13}Y_{31} =0_{k_1},\\
C_{21}Y_{12} + C_{23}Y_{32} &  = Y_{21}C_{12} + Y_{23}C_{32}= I_{k_2}, \\
Y_{21}C_{12}+Y_{23}B_{32} & = C_{21}Y_{12}+B_{23}Y_{32} = 0_{k_2}.
\end{align*}
Since $B_{ij}B_{ji}=C_{ij}C_{ji}=0_{k_i}$ for all $i\neq j$, we have
\begin{align}
B_{12}C_{21} + C_{13}B_{31} &= B_{13}C_{31} +C_{12}B_{21} =  I_{k_1}, \label{eq:b12}\\
B_{12}C_{21} + B_{13}C_{31} &= C_{12}B_{21} + C_{13}B_{31}=0_{k_1}, \label{eq:b21}\\
C_{21}B_{12} + C_{23}B_{32} &= B_{21}C_{12} +B_{23}C_{32}= I_{k_2}, \label{eq:c12}\\
B_{21}C_{12} + C_{23}B_{32} &= C_{21}B_{12} + B_{23}C_{32}=0_{k_2} \label{eq:c21}.
\end{align}
From \eqref{eq:b21} and \eqref{eq:c21}, we have
$$
C_{13}B_{31} = C_{12}B_{21} \quad\text{and}\quad B_{23}C_{32} = C_{21}B_{12}.
$$
Therefore, \eqref{eq:b12} and \eqref{eq:c12} provide that
\begin{align*}
Y_{12}Y_{21} &= (B_{12}+C_{12})(B_{21}+C_{21})=B_{12}C_{21} + C_{12}B_{21} = B_{12}C_{21} + C_{13}B_{31} =I_{k_1}, \\
Y_{21}Y_{12} &= (B_{21}+C_{21})(B_{12}+C_{12})=B_{21}C_{12} + C_{21}B_{12} = B_{21}C_{12} + B_{23}C_{32} =I_{k_2}.
\end{align*}
Consequently,
$ k_1 = k_2$ and $Y_{12} = Y_{21}^{-1}$.
Applying similar arguments to other indices, we can conclude that
$k_1=k_2 =\cdots = k_n :=k$, and $Y_{ij}=Y_{ji}^{-1}$ for all $i\neq j$.
In particular, \eqref{Eij-1} also holds in this case.

\smallskip

\noindent\textbf{Step 3.}
Assuming \eqref{Eij-1} from now on.
 We may replace $\Phi$ with the map
$$
X \mapsto
(I_k \oplus Y_{12} \oplus Y_{13} \oplus  \cdots \oplus Y_{1n}\oplus I_{r-nk})\Phi(X)
(I_k \oplus Y_{12}^{-1} \oplus Y_{13}^{-1} \oplus  \cdots \oplus Y_{1n}^{-1}\oplus I_{r-nk}),
$$
so that
\begin{equation}\label{Eij-2}
\Phi(E_{1j}+E_{j1}) = ((E_{1j}+E_{j1})\otimes I_k)\oplus 0_{r-nk} \quad\text{for $j=2,\ldots, n$.}
\end{equation}
Since $\Phi(E_{12} + E_{21}) = ((E_{12} + E_{21})\otimes I_k)\oplus 0_{r-nk}$, we see that
$B_{12}+C_{12}=I_k$ and $B_{21}+C_{21}=I_k$.
By Lemma \ref{nil}(c),
there are invertible matrices
$U\in \bM_{k}(\IF)$, $R\in \bM_{p}(\IF)$ and a nilpotent matrix $N\in \bM_{q}(\IF)$
such that $p+q=k$ and
$$
\hat B_{12} := UB_{12}U^{-1}=\begin{pmatrix} R & 0 \cr
0 & N \cr\end{pmatrix}.
$$
If $\hat B_{21} := UB_{21}U^{-1}$,
then  $\hat B_{12}\hat B_{21} = \hat B_{21}\hat B_{12} = 0$.
Thus, $\hat B_{21} = 0_{p} \oplus T$ with $T \in \bM_q(\IF)$
satisfying $TN = NT = 0_q$.
Since $B_{12} + C_{12} = B_{21} + C_{21} = I_k$,
we have
$$\hat C_{12} := UC_{12}U^{-1} = (I_p - R) \oplus (I_q - N) \quad \hbox{
and } \quad \hat C_{21} := UC_{21}U^{-1} = I_p \oplus (I_q - T).$$
Since
$(I_q-N) \in \bM_q(\IF)$ is invertible, and
$$ 0_k = C_{12}C_{21} = \hat C_{12} \hat C_{21} =
(I_p-R)I_p \oplus (I_q-N)(I_q-T),$$
we see that $R = I_p$ and $I_q = T$, and thus $N = 0_q$.

Replace $\Phi$ by the map $A\mapsto (I_n\otimes U)\Phi(A)(I_n\otimes U^{-1})$.
Then
$$
\Phi(E_{12})=\begin{pmatrix} 0_{p} & 0 & I_{p} & 0\cr
0 & 0_{q} & 0 & 0_{q} \cr
0_{p} & 0 & 0_{p} & 0 \cr
0 & I_{q} & 0 & 0_{q} \cr\end{pmatrix} \oplus 0_{r-2k}
\hspace{5mm}\mbox{and}\hspace{5mm}
\Phi(E_{21})=\begin{pmatrix} 0_{p} & 0 & 0_{p} & 0\cr
0 & 0_{q} & 0 & I_{q} \cr
I_{p} & 0 & 0_{p} & 0 \cr
0 & 0_{q} & 0 & 0_{q} \cr\end{pmatrix} \oplus 0_{r-2k}.
$$
Suppose $n \ge 3$.
It follows from \eqref{Eij-form} that
\begin{equation}\label{Eij-3}
\Phi(E_{ij})= \Phi(E_{ii})\Phi(E_{ij}) + \Phi(E_{ij})\Phi(E_{ii}) \quad \hbox{ for any  } i \ne j.
\end{equation}
For any $j = 3, \dots, n$,
$\Phi(E_{11} + E_{12} + E_{1j})$ and $\Phi(E_{11} + E_{21} + E_{j1})$ are both idempotents.  Thus, by (\ref{Eij-3}),
$$0 = [\Phi(E_{11} + E_{12} + E_{1j})]^2 -\Phi(E_{11} + E_{12} + E_{1j}) =
\Phi(E_{12})\Phi(E_{1j})+\Phi(E_{1j})\Phi(E_{12}),$$
$$0 = [\Phi(E_{11} + E_{21} + E_{j1})]^2 - \Phi(E_{11} + E_{21} + E_{j1}) =
\Phi(E_{21})\Phi(E_{j1})+\Phi(E_{j1})\Phi(E_{21}).
$$
By (\ref{Eij-2}), $B_{1j}+C_{1j}=B_{j1}+C_{j1}=I_k$.
The above equations lead to the conclusion that
$$
B_{1j}=C_{j1}=I_{p}\oplus 0_{q} \hspace{5mm}\mbox{and}\hspace{5mm}
B_{j1}=C_{1j}=0_{p}\oplus I_{q}.
$$

We are going to show that
\begin{equation}\label{Eij-4}
\Phi(E_{ij}+E_{ji}) = (E_{ij}+E_{ji})\otimes I_k \quad\text{whenever\ $2\le i < j\le n$.}
\end{equation}
To see this,  let $X = uv^\T/(2+a)$ with
$u = ae_1 + e_i + e_j$ and $v = e_1 + e_i + e_j$, where $a = 2$ if
$\IF$ has characteristic 3, and $a = 1$ otherwise.
Then $X$ is a rank one idempotent so that,
up to a permutation, $\Phi(X)$ is a direct sum of the zero matrix $0_{r-3k}$ and
$$ Z = (a+2)^{-1}\begin{pmatrix}
aI_{k} & a I_{p}\oplus I_{q} & a I_{p}\oplus I_{q} \cr
I_{p}\oplus aI_{q} & I_k & Y_{ij}\cr
I_{p}\oplus aI_{q} & Y_{ij}^{-1} & I_k\cr\end{pmatrix}.$$
Hence,
$Z^2 =  Z$. Considering the $(2,3)$ block of $Z^2$ and $Z$,
we see that $aI_k + 2Y_{ij} = (a+2) Y_{ij}$. Hence, $Y_{ij} = I_k$,
and thus $Y_{ji} = Y_{ij}^{-1} = I_k$.

Now, for $2 \le i < j \le n$,
$\Phi(E_{i1} + E_{ii} + E_{ij})$ and $\Phi(E_{1i} + E_{ii} + E_{ji})$ are both idempotents.  We can use the previous arguments to conclude that
$$
\Phi(E_{i1})\Phi(E_{ij})+\Phi(E_{ij})\Phi(E_{i1})=0
\hspace{5mm}\mbox{and}\hspace{5mm}
\Phi(E_{1i})\Phi(E_{ji})+\Phi(E_{ji})\Phi(E_{1i})=0.
$$
By a direct calculation and (\ref{Eij-4}), we have
$$
B_{ij}=C_{ji}=I_{p}\oplus 0_{q} \hspace{5mm}\mbox{and}\hspace{5mm} B_{ji}=C_{ij}=0_{p}\oplus I_{q}.
$$
After a permutation similarity, we have
$$
 \Phi(A) = (A \otimes I_{p}) \oplus (A^\T \otimes I_{q})\oplus 0_{r-nk}
$$
for $A = E_{ij}$ for all $i,j$, and the result follows by
linearity.
\qed

Recall that a linear map $\Phi: \bM_n(\IF) \rightarrow \bM_r(\IF)$
is a Jordan homomorphism if $\Phi(AB+BA) = \Phi(A)\Phi(B) + \Phi(B)\Phi(A)$
for all $A, B \in \bM_n(\IF)$.
It is known and easy to check that
if the characteristic of $\IF$ is not 2, then the above
condition is equivalent to $\Phi(A^2) = \Phi(A)^2$ for all $A \in \bM_n(\IF)$; in this case,
$\Phi $ sends idempotents to idempotents if and only if $\Phi$ assumes the form \eqref{oldform-2} (\cite[Corollary 6.1.8]{ZTC06}).
When the characteristic of $\IF$ is not 2,
the sum of two idempotents $A$ and $B$ is an idempotent
if and only if $A$ and $B$ are disjoint.
To see this, let
$A, B \in \bM_n(\IF)$ be idempotents.  Then
$(A+B)^2 = A+B$ if and only if
$AB+BA =0_n$.  If $AB+BA = 0_n$ then $A^2B + ABA = ABA + BA^2 =  0_n$.
Since $A$ is an idempotent,  $AB = BA = 0_n$.
The other implication is trivial.

\begin{corollary}\label{cor:J-homo}
Let  $\bM_n(\IF) \ne \bM_2(\mathbb{Z}_2)$. Suppose  $\Phi: \bM_n(\IF) \rightarrow \bM_r(\IF)$ is a  linear map sending
rank one disjoint idempotents to
disjoint idempotents, or equivalently, $\Phi$ assumes the form \eqref{oldform-2}.
Then the following conditions hold.
\begin{itemize}
\item[{\rm (a)}] $\Phi$ is a Jordan homomorphism.
\item[{\rm (b)}] $\Phi(A^2) = \Phi(A)^2$ for all $A \in\bM_n(\IF)$.
\item[{\rm (c)}] $\Phi$ sends idempotents to idempotents.
\item[{\rm (d)}] $\Phi$ sends idempotents of rank at most 2 to
idempotents.
\end{itemize}
In case when the characteristic of $\IF$ is not 2, conditions {\rm (a) -- (d)}
are equivalent to each other, as well as  the condition that $\Phi$ assumes the form \eqref{oldform-2}.
\end{corollary}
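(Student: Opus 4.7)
The proof proposal splits into two parts matching the two assertions of the corollary.

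For the first part (where no assumption on the characteristic is made), I would show that the explicit form \eqref{oldform-2} directly implies (a)--(d). The map $A\mapsto A\otimes I_p$ is an algebra homomorphism and $A\mapsto A^\T\otimes I_q$ is an algebra anti-homomorphism; both of these satisfy $\Psi(AB+BA)=\Psi(A)\Psi(B)+\Psi(B)\Psi(A)$ (for an anti-homomorphism, $\Psi(A)\Psi(B)$ and $\Psi(B)\Psi(A)$ just swap), and the zero block trivially does. Since Jordan multiplicativity is preserved under direct sums and under the similarity $X\mapsto S X S^{-1}$, statement (a) follows. For (b), one checks $(A\otimes I_p)^2=A^2\otimes I_p$ and $(A^\T\otimes I_q)^2=(A^2)^\T\otimes I_q$ summand by summand. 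Statement (c) then follows from (b): $A^2=A$ forces $\Phi(A)^2=\Phi(A^2)=\Phi(A)$ (or, more directly, each block in the image is visibly idempotent when $A$ is). Finally, (d) is immediate from (c).

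For the second part, assume $\operatorname{char}\IF\ne 2$ and establish the cycle $(\text{form})\Rightarrow(a)\Rightarrow(b)\Rightarrow(c)\Rightarrow(d)\Rightarrow(\text{form})$. The first implication is covered by the previous paragraph. The implication (a)$\Rightarrow$(b) uses characteristic $\ne 2$: taking $A=B$ in the Jordan identity gives $2\Phi(A^2)=2\Phi(A)^2$, hence $\Phi(A^2)=\Phi(A)^2$. The step (b)$\Rightarrow$(c) is again the identity $\Phi(A)^2=\Phi(A^2)=\Phi(A)$ when $A^2=A$, and (c)$\Rightarrow$(d) is trivial.

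The key remaining step is (d)$\Rightarrow(\text{form})$, which I would route through Theorem \ref{thm:main}(a). Let $P,Q$ be disjoint rank one idempotents in $\bM_n(\IF)$. Then $P+Q$ is an idempotent of rank $2$, so by (d) all three of $\Phi(P)$, $\Phi(Q)$ and $\Phi(P)+\Phi(Q)=\Phi(P+Q)$ are idempotents. Expanding $(\Phi(P)+\Phi(Q))^2=\Phi(P)+\Phi(Q)$ gives $\Phi(P)\Phi(Q)+\Phi(Q)\Phi(P)=0$. Invoking the observation stated just before the corollary (valid in characteristic $\ne 2$) that two idempotents whose anticommutator vanishes must be disjoint, one concludes $\Phi(P)\Phi(Q)=\Phi(Q)\Phi(P)=0$. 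Hence $\Phi$ sends disjoint rank one idempotents to disjoint idempotents, and since $\bM_n(\IF)\ne\bM_2(\IZ_2)$, Theorem \ref{thm:main}(a) delivers the form \eqref{oldform-2}. This final link, and in particular the use of the anticommutator lemma to force disjointness of the images, is the one nontrivial place where the characteristic assumption is genuinely needed; everything else is essentially a block-diagonal bookkeeping exercise.
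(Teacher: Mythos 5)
Your proposal is correct and follows essentially the same route as the paper: the explicit form \eqref{oldform-2} yields (a)--(d) by direct block computation, and in characteristic $\ne 2$ the only substantive step is (d) $\Rightarrow$ \eqref{oldform-2}, obtained by noting that $\Phi(P)+\Phi(Q)$ idempotent forces $\Phi(P)\Phi(Q)+\Phi(Q)\Phi(P)=0$, applying the anticommutator observation preceding the corollary to get disjointness, and then invoking Theorem \ref{thm:main}(a). No gaps.
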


\it Proof. \rm  If $\Phi$ assumes the form \eqref{oldform-2},
then it is clear that conditions {\rm (a) -- (d)} hold. Suppose the characteristic of $\IF$ is not 2.
The implications
 (a) $\Rightarrow$ (b)
$\Rightarrow$ (c) $\Rightarrow$ (d) are clear.
 Assume (d) holds. Then
$\Phi$ sends disjoint rank one idempotents to disjoint idempotents
by the discussion before the corollary. Therefore, $\Phi$ assumes the form
(\ref{oldform-2}) by Theorem \ref{thm:main}.
\qed

\begin{example}  \label{eg:ch2-id-not}
When $\IF$ has characteristic $2$, one cannot conclude that $\Phi$ assumes the form \eqref{oldform-2}
by using any of the conditions {\rm (a) -- (d)} in Corollary \ref{cor:J-homo}.
For example, consider the linear map $\Phi: \bM_n(\IF)\to \bM_r(\IF)$ defined by
$$
A \mapsto \operatorname{trace}(A)E
$$
for any nonzero idempotent $E\in \bM_r(\IF)$.
For any $A,B\in \bM_n(\IF)$, observe that
\begin{multline*}
  \operatorname{trace}(AB + BA) = 2\operatorname{trace}(AB) =0 \\
= 2\operatorname{trace}(A)\operatorname{trace}(B)=\operatorname{trace}(A)\operatorname{trace}(B) + \operatorname{trace}(B)\operatorname{trace}(A).
\end{multline*}
Hence, $\Phi$ is a linear
Jordan homomorphism.
Since the trace of an idempotent in $\bM_n(\IF)$ is either zero or one (modulo $2$),
 $\Phi$ also sends idempotents to idempotents.
  But $\Phi$ does not assume the form \eqref{oldform-2}.
  Note also that the nonzero Jordan homomorphism $\Phi$ is not injective and the Jordan ideal $\Phi^{-1}(0)$ consisting of trace zero matrices is not a two sided ideal of $\bM_n(\IF)$.
Moreover,
it  does not send disjoint idempotents to disjoint idempotents,
and thus Theorem \ref{thm:main} does not apply.
\end{example}

\section{Linear preservers of matrices annihilated by a fixed polynomial}\label{S:4}

We  study a linear map
$\Phi: \bM_n(\IF)\to \bM_r(\IF)$  such that
\begin{equation}\label{poly}
f(\Phi(A)) = 0\quad\text{whenever}\quad f(A) = 0
\end{equation}
for a given $f(x) \in \IF[x]$ with distinct zeroes.
We will see that such linear maps $\Phi$ have nice structure.
In \cite{Hou}, the authors consider unital linear maps on complex operator algebras satisfying (\ref{poly}),
and show that $\Phi$ will preserve idempotents so that Corollary \ref{cor:J-homo} applies.
It turns out that the result can be extended to linear maps between matrices over
any field $\IF$, with  Theorem \ref{thm:main} instead, under some mild technical assumptions
on the zero set $Z(f)=\{a_1, \ldots, a_m\}$ of the polynomial $f(x)=(x-a_1)\cdots (x-a_m)$ with
$m\geq 2$ distinct zeroes.  Namely, we   assume that   $Z(f)$ is not a coset of an additive subgroup of $\IF$.
Equivalently, $\{a_j-a_1:1 \leq j \leq m\}$ is not an additive subgroup of
 $\IF$.
It also amounts to saying that the coset condition $a-b+c\in Z(f)$ does not always hold for any $a,b,c\in Z(f)$.

 Clearly, if $\IF$ has characteristic $0$, then the finite set $Z(f)$ cannot be a coset of an additive subgroup of $\IF$.
 It is also the case if the degree $m$ of $f$ is not a power of the characteristic $k$ of $\IF$ (see Remark \ref{rem:poly}).
We have examples below to illustrate that
 the technical assumption is necessary.
We also show with additional remarks  that characterizing those $\Phi$  satisfying
 (\ref{poly}) without the technical assumption would be challenging.

\subsection{Unital linear maps}

\begin{theorem}\label{thm:sep-poly}
 Let  $f(x) = (x-a_1)(x-a_2)(x-a_3) \cdots (x-a_m)$ with
$m\ge 2$ distinct zeroes
in a field $\IF$ such that
  its zero set $Z(f)$   is not an additive
coset.  Suppose that
 $\Phi: \bM_n(\IF) \rightarrow \bM_r(\IF)$ is a unital linear map.
The following conditions are equivalent.
\begin{itemize}
\item[{\rm (a)}] There are nonnegative integers $p, q$ satisfying
$(p+q)n = r$ and an invertible matrix $S \in \bM_r(\IF)$ such that
$\Phi$ assumes the form
\begin{align}\label{eq:poly-stand}
A \mapsto S^{-1}\begin{pmatrix} A\otimes I_p & \cr &  A^\T \otimes I_q\end{pmatrix}S.
\end{align}
\item[{\rm (b)}] $g(\Phi(A)) =0_r$ whenever $g(A) = 0_n$, for every polynomial $g(x) \in \IF[x]$.
\item[{\rm (c)}] $f(\Phi(A)) = 0_r$ whenever $f(A) = 0_n$.
\item[{\rm (d)}] $\Phi$ sends disjoint rank one idempotents to disjoint idempotents.
\end{itemize}
\end{theorem}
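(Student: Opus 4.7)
My plan is to verify the cycle (d) $\Rightarrow$ (a) $\Rightarrow$ (b) $\Rightarrow$ (c) $\Rightarrow$ (d). Three of these are routine: (a) $\Rightarrow$ (b) is a direct computation, since $g(\Phi(A)) = S^{-1}[(g(A) \otimes I_p) \oplus (g(A)^\T \otimes I_q)]S$ for every polynomial $g$; (b) $\Rightarrow$ (c) is trivial; and for (d) $\Rightarrow$ (a), the hypothesis on $Z(f)$ automatically rules out $\bM_n(\IF) = \bM_2(\IZ_2)$ (since in $\IZ_2$ the only candidate $Z(f) = \{0,1\}$ is itself an additive subgroup), so Theorem \ref{thm:main}(a) delivers the form \eqref{oldform-2}, and unitality $\Phi(I_n) = I_r$ then forces the zero block to vanish, giving \eqref{eq:poly-stand}.

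The substantive direction is (c) $\Rightarrow$ (d). First I show that $\Phi$ sends every idempotent $P$ to an idempotent. For any two distinct $a_i, a_j \in Z(f)$, the matrix $a_iP + a_j(I-P)$ has spectrum $\{a_i, a_j\} \subseteq Z(f)$ and hence is annihilated by $f$; since $\Phi$ is unital, hypothesis (c) applied to $\Phi(a_iP + a_j(I-P)) = a_jI + (a_i-a_j)\Phi(P)$ translates into the identity
\begin{equation*}
\prod_{k=1}^m \Big(\Phi(P) - \tfrac{a_k - a_j}{a_i - a_j}\, I_r\Big) = 0_r.
\end{equation*}
This is a polynomial in $\Phi(P)$ with $m$ distinct roots in $\IF$, so $\Phi(P)$ is diagonalizable over $\IF$ with eigenvalues in $S_{ij} := \{(a_k - a_j)/(a_i - a_j) : 1 \le k \le m\}$. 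Varying $(i,j)$, the eigenvalues lie in $\bigcap_{i \ne j} S_{ij}$.

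The arithmetic core is to show this intersection is $\{0, 1\}$. The inclusion $\supseteq$ is immediate ($k = j$ and $k = i$ give $0$ and $1$). Conversely, $\lambda \in \bigcap_{i\neq j} S_{ij}$ translates to $\lambda(Z(f) - a_j) \subseteq Z(f) - a_j$ for every $j$. If $\lambda \neq 0, 1$, then the affine maps $T_j(x) = \lambda x + (1-\lambda)a_j$ all preserve $Z(f)$, so their compositions $T_i \circ T_j^{-1}$ furnish translations $x \mapsto x + (1-\lambda)(a_i - a_j)$ that also preserve $Z(f)$. The subgroup $G \le (\IF, +)$ generated by these increments is finite (it stabilizes the finite set $Z(f)$) and equals $(1-\lambda)H$, where $H = \langle a_i - a_j : i,j \rangle$; since $Z(f) \subseteq a_1 + H$, one sees $G \subseteq H$, and $|G| = |H|$ (multiplication by $1-\lambda \neq 0$ is a bijection on $H$) then forces $G = H$. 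Consequently $Z(f) - a_1 = H$, an additive subgroup — contradicting the hypothesis. Hence $\Phi(P)^2 = \Phi(P)$.

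Given disjoint rank-one idempotents $P_1, P_2$, the previous step applied to $P_1 + P_2$ yields the anti-commutation $\Phi(P_1)\Phi(P_2) + \Phi(P_2)\Phi(P_1) = 0$. For $\mathrm{char}(\IF) \ne 2$, multiplying by $\Phi(P_1)$ on each side and using $\Phi(P_1)^2 = \Phi(P_1)$ gives $2\Phi(P_1)\Phi(P_2) = 0$, hence disjointness. The main obstacle is $\mathrm{char}(\IF) = 2$ (in which case the hypothesis forces $m \ge 3$), where the relation merely says $A := \Phi(P_1)$ and $B := \Phi(P_2)$ commute. To finish, for $n \ge 3$ I would apply the same spectral analysis to $M = a_iP_1 + a_jP_2 + a_k(I - P_1 - P_2)$: after simultaneously diagonalizing the commuting idempotents $A, B$, the scalar $a_i + a_j - a_k$ appears as an eigenvalue of $\Phi(M)$ on the joint eigenspace $\{A = B = 1\}$, which condition (c) forces to lie in $Z(f)$; the non-coset hypothesis supplies a triple with $a_i + a_j - a_k \notin Z(f)$, so this eigenspace must be trivial and $AB = 0$. (For $n = 2$, $P_2 = I - P_1$ makes $\Phi(P_1)\Phi(P_2) = \Phi(P_1)(I - \Phi(P_1)) = 0$ directly.)
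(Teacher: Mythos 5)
Your proof is correct, and while it follows the same overall cycle as the paper --- (a) $\Rightarrow$ (b) $\Rightarrow$ (c) $\Rightarrow$ (d) $\Rightarrow$ (a), with (d) $\Rightarrow$ (a) delegated to Theorem \ref{thm:main}(a) and the weight carried by (c) $\Rightarrow$ (d) --- your treatment of the key step is genuinely different from the paper's. To show $\Phi$ preserves idempotents, the paper argues by contradiction: it normalizes $a_1=0$, $a_2=1$, observes that an eigenvalue $\lambda\notin\{0,1\}$ of $\Phi(E)$ gives $\lambda Z(f)=Z(f)$ and $(1-\lambda)Z(f)=Z(f)$, and then bootstraps via the combinations $a_i(I-E)+a_jE$ to show $2,3,\dots\in Z(f)$ and eventually that $Z(f)$ contains, and is a vector space over, the subfield $\IF_k(\lambda)$ --- a contradiction. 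You instead extract from $f(a_iP+a_j(I-P))=0$ the symmetric condition $\lambda(Z(f)-a_j)\subseteq Z(f)-a_j$ for every $j$, convert the affine maps $T_j$ into translations $T_i\circ T_j^{-1}$ stabilizing the finite set $Z(f)$, and conclude by a counting argument that $Z(f)-a_1$ equals the difference group $\langle a_i-a_j\rangle$. This is shorter, needs no case split on the characteristic at this stage, and isolates exactly where the non-coset hypothesis enters; the paper's version yields the (unneeded) stronger conclusion that $Z(f)$ is an $\IF_k(\lambda)$-subspace. For disjointness in characteristic $2$ the two arguments are essentially the same idea: the paper normalizes $0\in Z(f)$ and shows $a_i+a_j\in Z(f)$ would make $Z(f)$ an additive group, while you use the three-term combination $a_iP_1+a_jP_2+a_k(I-P_1-P_2)$ and quote the failure of the coset condition $a-b+c\in Z(f)$ directly, which saves the normalization. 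Your separate handling of $n=2$ (where $P_2=I-P_1$ makes disjointness immediate) and your observation that the hypotheses make the $\bM_2(\IZ_2)$ case vacuous are both correct.
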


\it Proof. \rm
The implications (a) $\Rightarrow$ (b) $\Rightarrow$ (c) are clear.

Suppose (c) holds.
To derive (d), we first show that $\Phi$ sends idempotents to idempotents. We assume by contradiction that $\Phi(E)$ is not an idempotent for an idempotent $E$ in $\bM_n(\IF)$.
In this case, $E$ is neither zero nor the identity matrix.
Consider $g(x)=f(ax+b)$ for any $a,b\in \IF$.
Then condition (c) holds if and only if $g(\Phi(A))=0_r$ whenever $g(A)=0_n$.
Observe that if the zero set $Z(f)$ of $f$ is not   an additive coset,
then neither is the zero set $Z(g)$ of $g$.
Thus, by replacing  $f(x)$ with $g(x)$ for some suitable $a,b$, we can assume that $a_1=0$ and $a_2=1$,
and $Z(f)$ is not an additive group.

Note that $f(X) = 0$ if and only if the minimum polynomial of $X$ divides $f(x)$.
It amounts to saying that $X$ is diagonalizable
with eigenvalues from the set $Z(f) = \{0, 1, a_3, \dots, a_m\}$.
Since $f(a_j E) = f(a_j)E=0$ for all $j=1,\ldots, m$,
we have
 $f(a_j\Phi(E)) = 0$, and thus
$a_j\Phi(E)$ is diagonalizable
with eigenvalues in  $Z(f)$  for all $j=1,\ldots, m$.
If $\Phi(E)$ has any eigenvalue $\lambda\neq 0, 1$, then $\lambda Z(f) = Z(f)$ implies
$Z(f)$ contains $\lambda^l$ for all $l\in \IZ$.
On the other hand, $I_n - E$ is also a nonzero idempotent but not the identity.
By the same arguments, we see that
 $\Phi(I_n-E)=I_n-\Phi(E)$ has $1-\lambda$ as an eigenvalue, and thus $(1-\lambda)^l\in Z(f)$ for any $l\in \IZ$.

For any $a_i, a_j\in Z(f)$,  the matrix $a_i(I_n-E) + a_j E$ is  diagonalizable  with eigenvalues $a_i, a_j$, and
thus $f(a_i(I_n-E) + a_j E)=0$.
Consequently,
\begin{quote}
\textsc{(\dag)}\qquad all eigenvalues of $a_i(I_n-\Phi(E)) + a_j \Phi(E)$ are in $Z(f)$ for any $a_i, a_j\in Z(f)$.
\end{quote}
In particular, for $a_i = (1-\lambda)^{-1}$ and $a_j=\lambda^{-1}$, we have $2\in Z(f)$.
It follows from $Z(f)=\lambda Z(f) = (1-\lambda)Z(f)$ that $2\lambda^l, 2(1-\lambda)^l\in Z(f)$ for all $l\in \IZ$.
Looking at the eigenvalues of  $(2(1-\lambda)^{-1})(I_n-\Phi(E)) + (\lambda^{-1})\Phi(E)$,
we see that $3\in Z(f)$, as well as $3\lambda^l, 3(1-\lambda)^l\in Z(f)$ for all $l\in \IZ$.
Inductively, we see that $0,1,2, 3,4, \ldots$ are all in $Z(f)$.
We thus have a contradiction if $\IF$ has characteristic $0$, since the polynomial
$f(x)$ cannot have infinitely many distinct zeroes.

Assume now that $\IF$ has characteristic $k> 0$. Let $\IF_k=\{0,1,2,\ldots, k-1\}$ be the  prime subfield of $\IF$.
Recall that $Z(f)$ contains $0$, and all $i\lambda^j$ and $i(1-\lambda)^j$ with $i,j\in \mathbb{Z}$, and thus their
arbitrary finite sums by (\dag).
It amounts to saying that  $Z(f)$
contains the  subfield $\IF_k(\lambda)$ of $\IF$.
If $b_2$ is any zero of $f$ outside $\IF_k(\lambda)$, then by the above arguments, we see that $Z(f)$ contains
all $ib_2\lambda^j$ and $ib_2(1-\lambda)^j$ with $i,j\in \mathbb{Z}$, and thus their arbitrary finite  sums by (\dag).
In other words, $b_2\IF_k(\lambda)\subseteq Z(f)$.
It follows from (\dag) again that
$\IF_k(\lambda) + b_2\IF_k(\lambda)\subseteq Z(f)$.
We can work on other zeroes  of $f$ until we arrive at the conclusion that the $m$ element set
\begin{align*}
Z(f) = \IF_k(\lambda) + b_2\IF_k(\lambda) + b_3\IF_k(\lambda) + \cdots + b_j\IF_k(\lambda)
\end{align*}
 is an additive group, or equivalently, a vector space over $\IF_k(\lambda)$, as well as over $\IF_k$.
 This contradiction ensures that $\Phi$ sends idempotents to idempotents.

If $\IF$ does not have characteristic 2, being a linear idempotent preserver $\Phi$ sends disjoint idempotents to
disjoint idempotents.  Suppose $\IF$ has characteristic 2.  We can assume  that
$f(x)=x(x-1)(x-a_3)\cdots (x-a_m)$ for some distinct $a_1=0, a_2=1, a_3, \ldots, a_m\in \IF\setminus\IF_2$.
The assumption that $Z(f)$ not being an additive group also forces $m\geq 3$ in this case.
Let $E,F\in \bM_n(\IF)$ be nonzero disjoint
idempotents.  Since $E+F$ is also an idempotent,   all $\Phi(E)$, $\Phi(F)$ and
 $\Phi(E+F)=\Phi(E) + \Phi(F)$ are idempotents.
 We  see that $\Phi(E)\Phi(F)+\Phi(F)\Phi(E)=0$, and thus $\Phi(E)\Phi(F)=\Phi(F)\Phi(E)$.
It follows that the idempotents $\Phi(E),\Phi(F)$ are simultaneously diagonalizable.
We can thus assume both $\Phi(E), \Phi(F)$ are diagonal matrices with diagonal entries $0$ or $1$.
If $\Phi(E)\Phi(F)\neq 0$,
then we can further assume that the $(1,1)$ entries of both $\Phi(E),\Phi(F)$ are $1$.
Since $f(a_i F + a_j E)=0$, we have $f(a_i\Phi(F) + a_j\Phi(E))=0$, and especially
the $(1,1)$ entry of the diagonal matrix $a_i \Phi(F) + a_j \Phi(E)$ is a zero of $f(x)$,
which says
$a_i + a_j\in Z(f)$ for $i=1,\ldots,m$, and so do  all their finite sums (and differences).
It follows that $Z(f)$ is an additive group, a contradiction.
Hence, $\Phi$ sends disjoint idempotents to disjoint idempotents in any case.
This establishes (d)

Finally, the implication (d) $\Rightarrow$ (a) follows from
Theorem \ref{thm:main}, since  $\Phi$ is unital and, by the assumption, $\IF\neq \IZ_2$.
\qed

\begin{remark}\label{rem:poly}
(i) Suppose that $\IF$ is a  field of characteristic $k\neq 0$.
If $Z(f)-a_1$ is an additive group, then it is also a finite dimensional vector space over $\IF_k=\{0,1,\ldots, k-1\}$, and thus
$Z(f)$ has
$k^l$ elements  for some positive integers $l$.
In particular, when the degree $m$ of $f(x)$
 is not a power of $k$, we always have the implication (c) $\implies$ (d) in Theorem \ref{thm:sep-poly}.

(ii) We note that  counter examples to the conclusion of Theorem \ref{thm:sep-poly} are provided
in \cite{Hou}, when the polynomial $f(x)$ has
repeated zeroes.
\end{remark}

\begin{example}\label{eg:Full-AP}
Let a field $\IF$ have nonzero characteristic $k$, and denote by $\IF_k=\{0,1,2,\ldots, k-1\}$ the  prime subfield of $\IF$.
If the zero set $Z(f)$ of a polynomial $f(x)\in \IF[x]$ is an additive coset in $\IF$ then the equivalences in Theorem \ref{thm:sep-poly}
need not hold.

\smallskip

\noindent
(a) The unique polynomial $f(x)\in \IZ_2[x]$ with distinct zeroes in $\IZ_2$ is $f_{\IF_2}(x)=x(x-1)=x^2-x$.
Any linear map $\Phi: \bM_n(\mathbb{Z}_2)\to \bM_r(\IZ_2)$ preserving matrices annihilated by $f_{\IF_2}(x)$ is the one preserving
idempotents.
Consider the  map  defined by
$$
A\mapsto a_{11}I_r,
$$
where $a_{11}$ is the $(1,1)$ entry of $A\in \bM_n(\mathbb{Z}_2)$, and $a_{11}$ is either $0$ or $1$.
The above unital linear map sends idempotents to idempotents.
But it does not assume the form \eqref{eq:poly-stand}.

\smallskip

\noindent
(b) Suppose $k$ does not divide $n$.
Let $\Phi: \bM_n(\IF)\to \bM_r(\IF)$ be the unital linear map defined by
$\Phi(A) = n^{-1}\operatorname{trace}(A)I_r$.
Note that $\Phi(A)$ does not assume the form \eqref{eq:poly-stand}.

(i) Let
$f_{\IF_k}(x) = x(x-1)(x-2)\cdots (x-k+1)=x^k - x$ in $\IF[x]$.
It is plain that $f_{\IF_k}(A)=0$ for any matrix $A$ over $\IF$ exactly when $A$ is diagonalizable with eigenvalues in  $\IF_k$.
It follows that
$$
f_{\IF_k}(\Phi(A))=0\quad\text{whenever}\quad f_{\IF_k}(A)=0.
$$

(ii)
 More generally, let
$W$ be an additive coset of $\IF$;
in other words, $W-a$ is an additive group for any $a\in W$.
 Let $f_{W}(x)=\prod_{w\in W}\, (x-w)$.
It is plain that $f_{W}(A)=0$ for any matrix $A\in \bM_n(\IF)$ exactly when $A$ is diagonalizable with all $n$ eigenvalues in  $W$.
In this case, $n^{-1}\operatorname{trace}(A)\in W$.  Therefore,
we have again
$$
f_W(\Phi(A))=0_r \quad\text{whenever}\quad f_W(A)=0_n, \quad\forall A\in \bM_n(\IF).
$$

\smallskip

\noindent
(c) Suppose $k$ divides $n$.
In this case, the averaging trace functional is not defined.
However, if $\IF$ is a finite field consisting of
$k^l$ elements, then the polynomial $f(x)=\prod_{a\in \IF}\, (x-a) = x^{k^l}-x$
vanishes on $\IF$.  Thus the unital linear map $\Psi: \bM_n(\IF)\to \bM_r(\IF)$, defined
by $\Psi(A)= a_{11}E_{11} + a_{22}(I_r - E_{11})$ for any $A=(a_{ij})\in\bM_n(\IF)$, satisfies the condition that
$f(\Psi(A))=0$ for all $A\in \bM_n(\IF)$.
However, $\Psi(A)$ does not assume the form \eqref{eq:poly-stand}.
\end{example}

\begin{remark}\label{rem:poly-2}
  (a) With Example \ref{eg:Full-AP}, we see that the assumption that
$Z(f)$ not being an additive coset in Theorem \ref{thm:sep-poly}
is also necessary for the implication (c) $\implies$ (d),  when $n$ is not a multiple of the characteristic $k$ of $\IF$.

(b) We may also consider examples in which  the range space of $\Phi$
is a subspace of diagonal matrices or a subspace of diagonalizable matrices. For a field which contains
no $a$ such that $a^2=-1$ and the sum of two squares is always a square (namely,  formally real and Pythagorean),
all symmetric matrices are diagonalizable (see, e.g., \cite{MSV93}).
In this case, the map could be quite wild.
This further illustrates that solving the problem without the technical assumption is challenging.
\end{remark}

\subsection{Removal of the unital assumption}

In the following, we try to relax the assumption that $\Phi(I_n)=I_r$ in Theorem \ref{thm:sep-poly}.
Let $f(x)=(x-a_1)(x-a_2)\cdots (x-a_m)\in \IF[x]$ be a polynomial with distinct zeroes.
We call $\alpha\in\IF$ a multiplier of the zero set, or a \emph{zero multiplier}, of $f(x)$ if
$\alpha Z(f)\subseteq Z(f)$.  We note that  $\alpha Z(f)=Z(f)$ whenever $\alpha$ is a nonzero multiplier of $Z(f)$.

Let
\begin{align*}
M(f)
= \{\alpha\in \IF: \alpha Z(f) \subseteq Z(f)\}
\end{align*}
denote the set of zero multipliers of $f(x)$.
It is easy to see that  
$$
0\in M(f)\quad\text{exactly when}\quad 0\in Z(f).
$$
Since $M(f)\setminus \{0\}$ is a finite multiplicative subgroup of $\IF\setminus\{0\}$, there is a least positive integer $h$
such that $M(f)\setminus \{0\}$ is the cyclic group $\{\lambda, \ldots, \lambda^h =1\}$ for a primitive $h$th root
$\lambda$ of unity in $\IF$.
Let $b_1\in Z(f)\setminus\{0\}$.
We have  $\lambda b_1, \lambda^2 b_1, \ldots, \lambda^h b_1=b_1$ in $Z(f)$, and thus
 $h\leq m$.  If there is any nonzero element $b_2$ in $Z(f)$
lying outside this cycle, then we have another cycle $\lambda b_2, \lambda^2 b_2, \ldots, \lambda^h b_2=b_2$ in $Z(f)\setminus\{0\}$.
In finite steps, we can partition $Z(f)\setminus\{0\}$ into cycles in this form; namely,
$$
Z(f)\setminus \{0\} = \bigcup_j \{b_j, \lambda b_j, \ldots, \lambda^{h-1} b_j\}
= \big(\bigcup_{j} b_jM(f)\big)\setminus \{0\}.
$$
We conclude that either $h$ divides $m$ and thus $\lambda^m=1$ when $0\notin Z(f)$, or  $h$ divides $m-1$ and thus
$\lambda^m=\lambda$ when  $0\in Z(f)$.
Accordingly, for any $\alpha\in M(f)\setminus\{0\}$, we have
\begin{align*}
f(\alpha x) &= f(x)\quad  \text{when $0\notin Z(f)$,}\\
 f(\alpha x) &= \alpha f(x)\quad   \text{when $0\in Z(f)$,\quad and}\\
Z(f(x))& =Z(f(\alpha x)) \quad  \text{in both cases.}
\end{align*}

\begin{theorem}\label{thm:without1}
Let $f(x)=(x-a_1)(x-a_2)\cdots (x-a_m)\in \IF[x]$  with $m\geq 2$ distinct zeroes in a field $\IF$
such that the zero set $Z(f)$ is not an additive coset.
Let  $\Phi: \bM_n(\IF)\to \bM_r(\IF)$ be a linear map  preserving matrices annihilated by $f(x)$.
Suppose that $\Phi(I_n)$ commutes with $\Phi(A)$ for all $A\in \bM_n(\IF)$.
Then there are nonnegative integers $p, q$ with
$s=r-np-nq\geq 0$, invertible diagonal matrices $D_1\in \bM_p(\IF)$, $D_2\in \bM_q(\IF)$, and an invertible matrix
$S  \in \bM_{r}(\IF)$ such that
\begin{align}\label{eq:Phi-canonical}
\Phi(A) =  S^{-1}\begin{pmatrix} A\otimes D_1 & &\cr &  A^\T \otimes D_2 &\cr & & 0_s\cr \end{pmatrix}S,
\quad\text{for all $A\in \bM_n(\IF)$}.
\end{align}
where $ D_1$ and $D_2$ have diagonal entries  from  $M(f)$, and $s=0$ if $0\notin Z(f)$.
\end{theorem}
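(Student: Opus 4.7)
The approach is to decompose $\Phi$ by simultaneously diagonalizing $\Phi(I_n)$ with the rest of the image, and then reduce each block to the unital case handled by Theorem~\ref{thm:sep-poly}. First, since $f(a_j I_n)=0$ for every $j$, the hypothesis gives $f(a_j \Phi(I_n))=0$; with any nonzero $a_j$ this shows $\Phi(I_n)$ is a scalar multiple of a matrix annihilated by the separable polynomial $f$, hence diagonalizable. Moreover, any eigenvalue $\lambda$ of $\Phi(I_n)$ satisfies $a_j \lambda \in Z(f)$ for all $j$, i.e.\ $\lambda Z(f) \subseteq Z(f)$, so $\lambda \in M(f)$. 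In particular $\lambda=0$ can occur only when $0 \in Z(f)$, which already yields the clause that $s=0$ when $0 \notin Z(f)$.

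Because $\Phi(I_n)$ commutes with every $\Phi(A)$, one can conjugate by some invertible $S \in \bM_r(\IF)$ to arrange $\Phi(I_n)=\bigoplus_i \lambda_i I_{r_i}$ as a block diagonal matrix with distinct eigenvalues $\lambda_i$, and simultaneously $\Phi(A)=\bigoplus_i \Phi_i(A)$; each $\Phi_i: \bM_n(\IF)\to \bM_{r_i}(\IF)$ remains an $f$-preserver because $f$ distributes across block-diagonal matrices. For each $\lambda_i \ne 0$, the unital rescaling $\Psi_i:=\lambda_i^{-1}\Phi_i$ still preserves $f$-annihilation, since $\lambda_i^{-1} Z(f)=Z(f)$ by $\lambda_i \in M(f)$. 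Applying Theorem~\ref{thm:sep-poly} to $\Psi_i$ yields $\Psi_i(A)=T_i^{-1}((A\otimes I_{p_i})\oplus (A^\T \otimes I_{q_i}))T_i$, so that $\Phi_i(A)=T_i^{-1}((A\otimes \lambda_i I_{p_i})\oplus (A^\T \otimes \lambda_i I_{q_i}))T_i$. A further permutation similarity assembles these blocks over all nonzero $\lambda_i$ into the desired $(A\otimes D_1)\oplus (A^\T \otimes D_2)$ form, with $D_1, D_2$ diagonal and entries in $M(f)\setminus\{0\}$.

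The main obstacle is showing that the block $\Phi_0$ corresponding to $\lambda_0=0$ (possible only when $0 \in Z(f)$) is identically zero, which will supply the $0_s$ block. For any rank-one idempotent $E \in \bM_n(\IF)$ and any distinct $a, b \in Z(f)$, the matrix $aE+b(I_n-E)$ has eigenvalues $\{a,b\}$ and is $f$-annihilated; using $\Phi_0(I_n)=0$, its image $(a-b)\Phi_0(E)$ is also $f$-annihilated. Specializing $b=0$, the eigenvalues of $\Phi_0(E)$ lie in $a^{-1}Z(f)$ for every nonzero $a \in Z(f)$, so every such eigenvalue $\mu$ satisfies $\mu Z(f)\subseteq Z(f)$, placing $\mu \in M(f)$. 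If a nonzero eigenvalue $\mu$ existed, then $(a-b)\mu \in Z(f)$ for all distinct $a,b\in Z(f)$; multiplying by $\mu^{-1}\in M(f)$ would give $a-b\in Z(f)$ for all such pairs, i.e.\ $Z(f)-Z(f)\subseteq Z(f)$, forcing $Z(f)$ to be an additive subgroup of $\IF$ --- contradicting the hypothesis that $Z(f)$ is not a coset. Hence $\Phi_0(E)$ is nilpotent; since a nonzero scalar multiple of it is annihilated by the separable $f$ and is therefore diagonalizable, $\Phi_0(E)=0$. As the rank-one idempotents span $\bM_n(\IF)$ by Lemma~\ref{nil}(a), $\Phi_0\equiv 0$, completing the proof.
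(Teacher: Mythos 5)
Your proposal is correct and follows essentially the same route as the paper: diagonalize $\Phi(I_n)$ with eigenvalues in $M(f)$, use the commutation hypothesis to split $\Phi$ into blocks $\Phi_i$ with $\Phi_i(I_n)=\lambda_i I_{r_i}$, kill the $\lambda_i=0$ block by showing its values on idempotents are diagonalizable with no nonzero eigenvalue (else $Z(f)$ would be closed under subtraction), and rescale the remaining blocks to apply Theorem~\ref{thm:sep-poly}. The only cosmetic difference is that you phrase the contradiction as $Z(f)-Z(f)\subseteq Z(f)$ after multiplying by $\mu^{-1}\in M(f)$, whereas the paper exhibits a specific pair $\alpha a,\alpha b$ with $\alpha(a-b)\notin Z(f)$; these are equivalent.
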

\begin{proof}
For any nonzero $a_j\in Z(f)$, we have $f(a_jI_n)=0$, and thus $f(a_j\Phi(I_n))=0$.
It follows $\Phi(I_n)$ is diagonalizable.
If $\lambda$ is an eigenvalue of $\Phi(I_n)$, then $f(\lambda a_j)=0$ for all $j=1,\ldots, m$.
This says $\lambda\in M(f)$.
Since $\Phi(I_n)\Phi(A)=\Phi(A)\Phi(I_n)$ for all $A\in \bM_n(\IF)$,
after a similarity transformation, we can assume that
$$
\Phi(I_n) =
\begin{pmatrix}
\lambda_1 I_{r_1} & & &\cr
& \lambda_2 I_{r_2} & & \cr
& & \ddots && \cr
& & & \lambda_l I_{r_l}
\end{pmatrix},
$$
with distinct eigenvalues $\lambda_1, \lambda_2, \ldots, \lambda_l$ from $M(f)$,
and there are linear maps $\Phi_j:\bM_n(\IF)\to \bM_{r_j}(\IF)$ with $\Phi_j(I_n)=\lambda_j I_{r_j}$ for $j=1,2,\ldots, l$ such that
$$
\Phi(A) = \begin{pmatrix}
\Phi_1(A) & & &\cr
& \Phi_2(A) & & \cr
& & \ddots && \cr
& & & \Phi_l(A)\end{pmatrix} \quad\text{for all $A\in \bM_n(\IF)$}.
$$

In case when some $\lambda_j=0$,  we claim that $\Phi_j$ is the zero map.
We note that  $0\in Z(f)$ since $0\in M(f)$.
Let $E$ be any  idempotent in $\bM_n(\IF)$. Since $\Phi_j(E) + \Phi_j(I_n-E)= \Phi_j(I_n) =0$, we
have $\Phi_j(E)=-\Phi_j(I_n-E)$.  For any $a,b\in Z(f)$, it follows from $f(aE + b(I_n-E)) = 0$ that
$f(a\Phi_j(E)+b\Phi_j(I_n-E))=f((a-b)\Phi_j(E))=0$.  If $\alpha$ is any nonzero eigenvalue of the diagonalizable matrix
$\Phi_j(E)$, we have $\alpha (a-b)\in Z(f)$ for all $a,b\in Z(f)$.
Letting $b=0$, we see that  $Z(f)=\alpha Z(f)$, which is not an additive group.
Therefore, there  are $\alpha a,\alpha b\in Z(f)$ such that $\alpha(a-b)\notin Z(f)$.
This contradiction says that $\Phi_j(E)$ has no nonzero eigenvalue, and thus $\Phi_j(E)=0$ for all idempotents $E$ in $\bM_n(\IF)$.
This forces $\Phi_j=0$ by Lemma \ref{P1P2} (a).

For those nonzero zero multiplier $\lambda_j$ of $f(x)$, it follows from the discussion before Theorem \ref{thm:without1}
that their inverses $\lambda_j^{-1}\in M(f)$, and the
unital linear map ${\lambda_j}^{-1}\Phi_j$ preserves matrices annihilated by $f(x)$.
We can then apply Theorem \ref{thm:sep-poly} to establish the desired assertions.
\end{proof}

The condition that $\Phi(I_n)$ commutes with all elements
in the range space of $\Phi$ may follow from other assumptions.

\begin{theorem}\label{thm:whenPhiunital}
  Let
$f(x)=(x-a_1)(x-a_2)\cdots (x-a_m)\in \IF[x]$ with $m\geq 2$ distinct zeroes in a field $\IF$
such that the zero set $Z(f)$ is not an additive coset.
 Suppose that  $\Phi: \bM_n(\IF)\to \bM_r(\IF)$ is a linear map preserving matrices annihilated by $f(x)$.
 If  $f(0)=0$ or $M(f)= \{1\}$,
  then $\Phi$ assumes the form \eqref{eq:Phi-canonical}.
\end{theorem}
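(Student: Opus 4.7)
The plan is to reduce both hypotheses to Theorem \ref{thm:without1} by showing that $\Phi(I_n)$ commutes with $\Phi(A)$ for every $A\in\bM_n(\IF)$; we treat the two cases separately.

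\textbf{Case $M(f)=\{1\}$.} Since $0\in M(f)$ exactly when $0\in Z(f)$, this hypothesis forces $0\notin Z(f)$, so every $a\in Z(f)$ is nonzero. For each such $a$, the matrix $aI_n$ is annihilated by $f$, hence so is $a\Phi(I_n)$; thus $a\Phi(I_n)$ is diagonalizable with eigenvalues in $Z(f)$. Any eigenvalue $\lambda$ of $\Phi(I_n)$ therefore satisfies $a\lambda\in Z(f)$ for every $a\in Z(f)$, i.e.\ $\lambda\in M(f)=\{1\}$. Combined with diagonalizability this forces $\Phi(I_n)=I_r$, so $\Phi$ is unital and Theorem \ref{thm:sep-poly} delivers the form \eqref{eq:Phi-canonical} with $D_1=I_p$, $D_2=I_q$, and $s=0$.

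\textbf{Case $f(0)=0$.} Now $0\in Z(f)\cap M(f)$. For any idempotent $E\in\bM_n(\IF)$ and any $\alpha,\beta\in Z(f)$, the matrix $\alpha E+\beta(I_n-E)$ is diagonalizable with eigenvalues in $\{\alpha,\beta\}\subseteq Z(f)$, hence annihilated by $f$. Setting $X=\Phi(E)$ and $Y=\Phi(I_n-E)$, this gives $f(\alpha X+\beta Y)=0$ for every $\alpha,\beta\in Z(f)$. Specializing to the cases $\beta=0$, $\alpha=\beta$, and $\alpha=0$ in turn shows that $X$, $\Phi(I_n)=X+Y$, and $Y$ are each diagonalizable with eigenvalues in $M(f)$. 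After a similarity we may therefore assume
\begin{align*}
\Phi(I_n)=\lambda_1 I_{r_1}\oplus\cdots\oplus\lambda_l I_{r_l}\oplus 0_s,
\end{align*}
with distinct nonzero $\lambda_1,\dots,\lambda_l\in M(f)$ and $s\ge 0$.

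Writing $X=(X_{ij})$ in block form relative to this decomposition and rewriting $\alpha X+\beta Y=(\alpha-\beta)X+\beta\Phi(I_n)$, the annihilation condition says that for every $\alpha,\beta\in Z(f)$ the matrix with diagonal blocks $(\alpha-\beta)X_{ii}+\beta\lambda_i I_{r_i}$ and off-diagonal blocks $(\alpha-\beta)X_{ij}$ is diagonalizable with all eigenvalues in $Z(f)$. The main obstacle is to extract from this the vanishing of the off-diagonal blocks $X_{ij}$ for $i\ne j$, which is equivalent to $X$ commuting with $\Phi(I_n)$. The plan is to argue that any nonzero off-diagonal block between eigenspaces for distinct $\lambda_i\ne\lambda_j$ would allow us to select $\alpha\ne\beta$ in $Z(f)$ for which $(\alpha-\beta)X+\beta\Phi(I_n)$ either develops a non-semisimple Jordan structure in the two-by-two block indexed by $(i,j)$ or acquires an eigenvalue outside $Z(f)$, contradicting the annihilation condition; the hypothesis that $Z(f)$ is not an additive coset, together with the cyclic structure of $M(f)\setminus\{0\}$, is used precisely to produce such $\alpha,\beta$. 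Once the block-diagonal property is established for $X=\Phi(E)$ for every idempotent $E$, linearity and Lemma \ref{P1P2}(a) (idempotents span $\bM_n(\IF)$) yield $[\Phi(I_n),\Phi(A)]=0$ for every $A$, and Theorem \ref{thm:without1} produces the canonical form \eqref{eq:Phi-canonical}.
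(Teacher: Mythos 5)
Your first case ($M(f)=\{1\}$) is correct and is essentially what the paper does: since $0\in M(f)$ exactly when $0\in Z(f)$, the hypothesis forces $0\notin Z(f)$, the matrices $a\Phi(I_n)$ for $a\in Z(f)$ are annihilated by $f$, so $\Phi(I_n)$ is diagonalizable with every eigenvalue in $M(f)=\{1\}$, hence $\Phi(I_n)=I_r$ and Theorem \ref{thm:sep-poly} applies.

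The case $f(0)=0$ contains a genuine gap. The whole content of this case is to prove that $\Phi(E)$ commutes with $\Phi(I_n)$ for every idempotent $E$, and at precisely that point you offer only a ``plan'': you assert that a nonzero off-diagonal block $X_{ij}$ would let one choose $\alpha,\beta\in Z(f)$ making $(\alpha-\beta)X+\beta\Phi(I_n)$ non-diagonalizable or giving it an eigenvalue outside $Z(f)$, but you never exhibit such $\alpha,\beta$ nor explain why a nonzero $X_{ij}$ forces either conclusion. This is not a routine verification: the spectrum of $(\alpha-\beta)X+\beta\Phi(I_n)$ is not determined blockwise, and it is far from clear that the finitely many constraints ``diagonalizable with spectrum in $Z(f)$'' kill the off-diagonal blocks rather than merely constraining products such as $X_{ij}X_{ji}$. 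The paper avoids this spectral route entirely and argues algebraically, imitating \cite[Lemma 2.2]{Hou}: with $E'=\Phi(E)$, $F'=\Phi(I_n-E)$ and $h$ the order of the cyclic group $M(f)\setminus\{0\}=\{\lambda,\dots,\lambda^h=1\}$, one first checks that $h^{-1}$ exists in $\IF$ (because $h$ divides $k^l-1$ when the characteristic is $k>0$), then uses $f\bigl(a(E+\lambda^t(I_n-E))\bigr)=0$ to obtain $(E'+\lambda^tF')^{h+1}=E'+\lambda^tF'$ for $t=1,\dots,h$ together with $E'^{h+1}=E'$ and $F'^{h+1}=F'$; expanding, writing $q_j$ for the sum of all noncommutative words in $h+1-j$ copies of $E'$ and $j$ copies of $F'$, and summing over $t$ using $\sum_{t=1}^h\lambda^{jt}=0$ for $1\le j\le h-1$, one isolates $hq_h=0$, hence $q_h=0$, and then $E'F'=F'E'$ follows from $q_hF'=F'q_h=0$. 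You would need either to carry out your spectral argument in full detail or to substitute an identity-based argument of this kind; as written, the commutation step is only announced, not proved.
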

\begin{proof}
Note first that as seen in the proof of Theorem \ref{thm:without1},  $\Phi(I_n)$ is diagonalizable with eigenvalues from $M(f)$.
If $M(f)=\{1\}$ then $\Phi(I_n)=I_r$ and  Theorem \ref{thm:sep-poly} applies.

Suppose  $f(0)=0$, and  assume that $Z(f)$ is not an additive group.
The aim is to show that $\Phi(I_n)$ commutes with all elements
in the range space of $\Phi$.

Let $h$ be the order of the cyclic group  $M(f)\setminus \{0\}= \{\lambda, \ldots, \lambda^h =1\}$.
We claim that $h^{-1}\in \IF$.
It is clear the case when $\IF$ has characteristic $k=0$.
Suppose $\IF$ have characteristic $k>0$ instead.
Since    $M(f)\setminus \{0\}$ can be considered as
a multiplicative  subgroup of  $\IF_k[\lambda]\setminus\{0\}$, which has $k^l -1$ elements for
some positive integer $l$.  Thus $h$ divides $k^l-1$.
Since $k$ and $k^l-1$ are coprime, we see that  $h$ is not a multiple of $k$, and thus $h^{-1}$ exists in $\IF$.

For any nontrivial idempotent $E$ of $\bM_n(\IF)$, any $a\in Z(f)$ and any $t=1,\ldots, h$,
we have $f(a( E + \lambda^t (I_n-E)))=0$, and thus $f(a(\Phi(E) + \lambda^t \Phi(I_n - E)))=0$.
It follows that $\Phi(E) + \lambda^t \Phi(I_n-E)$ is diagonalizable with eigenvalues from $M(f)$.
In particular,
$$
(\Phi(E) + \lambda^t \Phi(I_n-E))^{h+1} = \Phi(E) + \lambda^t \Phi(I_n-E), \quad\text{for $t=1, \ldots, h$.}
$$
Similar reasonings ensure that
$\Phi(E)^{h+1}=\Phi(E)$ and $\Phi(I_n-E)^{h+1}=\Phi(I_n-E)$.

In the following, we imitate the proof of  \cite[Lemma 2.2]{Hou} which deals with the complex case.
Denote by $E'=\Phi(E)$ and $F'=\Phi(I_n-E)$.
Then  ${E'}^{h+1}={E'}$, ${F'}^{h+1}={F'}$ and
\begin{align}\label{eq:EF}
  ({E'} + \lambda^t {F'})^{h+1} = {E'} + \lambda^t {F'}, \quad\text{for $t=1,\ldots, h$.}
\end{align}
  Let
  $$
  q_j=\ \text{the sum of all noncommutative products of ${h+1-j}$ many ${E'}$'s and $j$ many ${F'}$'s,}
  $$
for $j=1, \ldots, h$.
  Expanding \eqref{eq:EF}, we have
  $$
  {E'}^{h+1} + \sum_{j=1}^{h} \lambda^{jt}q_j + \lambda^{t(h+1)}{F'}^{h+1}= {E'} + \lambda^t {F'},
  $$
or
$$
\sum_{j=1}^{h} \lambda^{jt} q_j = 0, \quad\text{for all $t=1,\ldots, h$.}
$$
Since $\sum_{t=1}^{h} \lambda^{jt} = \lambda^j(1-\lambda^j)^{-1}(1-\lambda^{jh})=0$ for $j=1,\ldots, h-1$, we have
$$
\sum_{t=1}^{h} \sum_{j=1}^{h} \lambda^{jt} q_j = \sum_{j=1}^{h} q_j \sum_{t=1}^{h} \lambda^{jt} = hq_h =0.
$$
Since $h^{-1}$ exists in  $\IF$, we have $q_h=0$.
It follows from
\begin{align*}
q_h{F'} &= ({E'}{F'}^h + {F'}{E'}{F'}^{h-1} +\cdots + {F'}^h{E'}){F'}=0 \\
= {F'}q_h &= {F'}({E'}{F'}^h + {F'}{E'}{F'}^{h-1} +\cdots + {F'}^h{E'})
\end{align*}
that
$$
{E'}{F'} = - {F'}{E'}{F'}^{h} -\cdots - {F'}^h{E'}{F'} = {F'}{E'}.
$$
Consequently, $\Phi(E)\Phi(I_n) = E'(E'+F') = (E'+F')E' = \Phi(E)\Phi(I_n)$ for every idempotent $E\in \bM_n(\IF)$, and thus
$\Phi(I_n)$ commutes with all $A\in \bM_n(\IF)$.  Then Theorem \ref{thm:without1} applies.
\end{proof}

The following examples demonstrate  that the situation can be difficult if $f(0)\neq 0$ and $\Phi(I_n)$ does not commute with all $\Phi(A)$.

\begin{example}\label{eg:involution}\label{eg:counter-involutions}
Let $\IF$ be an arbitrary field, and $f(x)=(x-a_1)(x-a_2)\in \IF[x]$ with $a_1\neq a_2$.
 Suppose that  $\Phi: \bM_n(\IF)\to \bM_r(\IF)$ is a linear map preserving matrices annihilated by $f(x)$.

(a) Suppose that $a_1\neq - a_2$.  The zero set $Z(f)=\{a_1, a_2\}$ is not an additive coset
exactly when $\IF$ does not have characteristic $2$.
In this case, either $f(0)=0$ or the zero multiplier set $M(f)= \{1\}$.
 Theorem \ref{thm:whenPhiunital} ensures
that $\Phi$ assumes the form 
$$
A\mapsto S^{-1}\left[(A\otimes I_p)\oplus (A^\T\otimes I_q) \oplus 0_s\right]S,
$$
for an invertible $S\in \bM_r(\IF)$, and nonnegative integers $p,q,s$ such that $np+nq+s =r$.

If $\IF$ has characteristic $2$, then $\Phi$ might not assume the above form as Example \ref{eg:Full-AP}(a) demonstrates.

(b)
Suppose that $a_1=-a_2\neq 0$.  It can be reduced to that
 $f(x)=(x-1)(x+1) = x^2 -1$.
In this case,  both $\pm 1$ are zero multipliers of $f(x)$.
We see that
$\Phi$ preserves
involutions, that is, $\Phi(A)^2= I_r$ if $A^2=I_n$, but $\Phi$ does not always assume the expected form.
For example, consider the   map  $\Phi: \bM_2(\IF)\to \bM_4(\IF)$ sending
\begin{align}\label{eq:M2-M4}
\left(      \begin{array}{cc}        a & b \\        c & d \\      \end{array}    \right)\quad\text{to}\quad
\left(       \begin{array}{cccc}         a & b &&\\  c & -a && \\ && a&c\\ && b& d       \end{array}     \right).
\end{align}
Then $\Phi$ is  linear, injective  and sends symmetric matrices to symmetric matrices.  Moreover,
$\Phi(A)$ is an involution if and only if $A$ is an involution.
However, $\Phi(I_2)$ does not commute with  all $\Phi(A)$, and $\Phi$ does not assume   the   form \eqref{eq:Phi-canonical}.

In general, observe that
 $V \in \bM_2(\IF)$ is an involution if and only if $V = \pm I_2$ or $V$ has eigenvalues $1,-1$.
On the other hand, any $A\in \bM_2(\IF)$ can be written as
$A=(A- \operatorname{trace}(A)I_n) +\operatorname{trace}(A)I_2$ as a linear sum of a trace zero element and the identity matrix $I_2$.
One may define $\Phi$ by linearity such that $\Phi$ sends every trace zero $A$ to
$ S^{-1}[(A\otimes D_1)\oplus (A^\T \otimes D_2)\oplus 0]S \in \bM_r(\IF)$, and assign $\Phi(I_2) = D \in \bM_r(\IF)$, for any
 involutions $D_1, D_2, D$,  and invertible matrix $S$ (of appropriate sizes).
 Then $\Phi$ will preserve involutions, but $\Phi$ might not assume
  the form \eqref{eq:Phi-canonical}.
When we put $D_1=D_2=I_2$ and $D = I_4- 2E_{22}$, we get the example given in \eqref{eq:M2-M4}.
  For $n \geq 3$, it is however unclear whether such examples exist.
\end{example}

\section{Some possible extensions}\label{S:future}

As mentioned before, one may consider a further extension of Theorem \ref{thm:sep-poly} without any \textit{a priori} condition on $\Phi(I_n)$ or $M(f)$.
Some simple examples of $f(x)$ not covered by our results is $f(x) = x^m-1$ for $m\geq 3$.
Another interesting problem is characterizing linear maps $\Phi$ satisfying (\ref{poly}) for a general polynomial, which cannot be factorized as linear factors, or with repeated zeroes, in Theorems \ref{thm:sep-poly}, \ref{thm:without1} and \ref{thm:whenPhiunital}.

One may also extend our results to
$\Phi: \bM_n(\IF) \rightarrow \bL(Y)$, where $\bL(Y)$ is the set of all
linear operators on a linear space $Y$ over $\IF$.
If we know that $\Phi$ sends disjoint rank one idempotents to
disjoint idempotents, we can show that there is invertible $S \in B(Y)$
such that Lemma \ref{Ejj-form} holds. Then we can follow the proof of Theorem
\ref{thm:main} to modify $S$ and conclude that $\Phi(E_{ij}) = E_{ij}\otimes I_{K}$
for a subspace $K$ of $Y$.

 It would be interesting to consider linear maps
$\Phi: {\mathcal A} \rightarrow {\mathcal B}$,
where ${\mathcal A}$ and ${\mathcal B}$ are subspaces of
operators acting on linear spaces $X$ and $Y$ over $\IF$, respectively;
see, e.g., \cite{ZTC06,Kuzma}.  Note that the previous
studies often assume that the characteristics of
$\IF$ is not 2. Some of our techniques may be used to relax this condition,
and just assume that $\IF \ne \IZ_2$. For example, suppose
${\mathcal A}$ contains all finite rank operators in $\bL(X)$ and the identity operator $I$
and ${\mathcal B} = \bL(Y)$. If $\Phi: {\mathcal A} \rightarrow B(Y)$
preserves disjoint rank one idempotents,
then for any $n^2$-dimensional subspace  $\bV_n(T)$ of ${\mathcal A}$
with operators of the form
$T^{-1}(A \oplus 0)T$ for an invertible operator $T$, we can apply our results
to show that the restriction of $\Phi$ on $\bV_n(T)$ will be of the form
\eqref{oldform-2} for some invertible $S$, where $S$ may depend on $T$.
If we have some additional assumptions on $\Phi$, we may be able to show that
the operator $S$ is independent of $T$.

One may also consider other operator algebras ${\mathcal A}$
such as the algebra of  upper triangular matrices, or nested algebras, etc.
Also, it is interesting to extend our results or proof techniques to
additive maps, or multiplicative maps.

\section*{Acknowledgment}

C.-K. Li is an affiliate member of the Institute for Quantum Computing,
University of Waterloo. His research is supported by
Simons Foundation Grant 851334. This
research
started during his academic visit to Taiwan in 2018, which was supported by grants from Taiwan MOST.
He would like to express
his gratitude to the hospitality of several institutions there, including the
Academia Sinica,
National Chung Hsing University,  National Sun
Yat-sen University, and National Taipei University of Science and Technology.

M.-C. Tsai, Y.-S. Wang and N.-C. Wong are supported by Taiwan MOST grants 110-2115-M-027-002-MY2,
111-2115-M-005-001-MY2 and  110-2115-M-110-002-MY2,
respectively.

\end{document}